\newtheorem{thm}{\textsc{Theorem}}
\newtheorem{lem}[thm]{\textsc{Lemma}}
\newtheorem{prop}[thm]{\textsc{Proposition}}
\newtheorem*{prop*}{Proposition}
\newtheorem*{lem*}{\textsc{Lemma}}
\newtheorem*{rem}{Remark}
\newtheorem*{fact*}{\textsc{\textbf{Fact}}}
\theoremstyle{definition}
\newtheorem*{exa*}{Example}
\theoremstyle{definition}
\newtheorem*{ack}{Acknowledgement}
\newtheorem{defn}[thm]{Definition}
\newtheorem*{defn*}{Definition}
\theoremstyle{remark}
\newtheorem{thm*}[thm]{Theorem}
\DeclareMathOperator{\G}{\mathcal{Su}(3)}
\DeclareMathOperator{\Go}{\mathcal{Su}(3)_{\mathrm{obj}}}
\DeclareMathOperator{\im}{Im}
\DeclareMathOperator{\id}{Id}
\DeclareMathOperator{\Hh}{\mathrm{H}}
\DeclareMathOperator{\rk}{\mathfrak{r}}
\DeclareMathOperator{\F}{\mathcal{F}}
\DeclareMathOperator{\Fa}{\mathcal{F}_{\mathrm{arr}}}
\DeclareMathOperator{\Fo}{\mathcal{F}_{\mathrm{obj}}}
\title{Quantum $SU(3)$ as the $C^*$-algebra of a $2$-Graph}
\date{}
\author{Olof Giselsson}
\begin{document}
\maketitle
\abstract{We show that for $q\in (0,1),$ the $C^{*}$-algebra $SU_{q}(3)$ is isomorphic a rank $2$ graph $C^{*}$-algebra (in the sense of Pask and Kumjian). This graph is derived by passing the to the limit $q\to 0$ for a set of generators of $SU_{q}(3)$. Moreover, the isomorphism can be taken to be $\mathbb{T}^{2}$-equivariant with respect the right-action on $SU_{q}(3)$ and the gauge action coming from the $2$-graph.}

\section{introduction}
It's an observation that certain quantized algebras of continuous functions on homogeneous spaces can be given the structure of graph $C^{*}$-algebras. A well-known example are the Soibelman-Vaksman quantum odd spheres~\cite{vaso}. In particular, $SU_{q}(2)$ -- the algebra of quantized continuous functions on $SU(2)$ introduced by S.L. Woronowicz in~\cite{wor} -- is isomorphic to the graph $C^{*}$-algebra of the directed graph
$$
\begin{tikzpicture}[>=latex',node distance = 1.5cm]
\node (S2) {};
\node [left of = S2] (S1) {};
 \draw[->,very thick] (S1) to[out=130,in=230,looseness=16] node[above] {} (S1);
 \draw[->,very thick] (S2) to[out=-50,in=50,looseness=16] node[above] {} (S2);
\draw [->,very thick] (S1) to  (S2);
\foreach \n in {S1,S2}
  \node at (\n)[circle,fill,inner sep=2.2pt]{};
\end{tikzpicture}
$$
(Proposition 2.1 in~\cite{woj}). A natural question to ask would be if a similar result holds for $SU_{q}(n)$ when $n\geq 3$. However, the primitive spectrum of these $C^{*}$-algebras will topologically contain the space $\mathbb{T}^{n-1}$ and it's known that such $C^{*}$-algebras can not come from directed graphs (see the end of the introduction in~\cite{woj}). 
\\

In~\cite{hrank} David Pask and Alex Kumjian generalized the notion of a  $C^*$-algebra constructed from a directed graph, by introducing higher-rank graphs together with their associated Cuntz-Krieger algebras. In a graph of degree $k$ (or a $k$-graph), the usual dots-and-arrow presentation is replaced by a countable category $\mathcal{G}$ together with a functor $\mathfrak{r}:\mathcal{G}\to \mathbb{N}^{k}$ called the {\it rank functor} (the semi-group $\mathbb{N}^{k}$ is considered as a category with one element). In this context, a $1$-graph is equivalent to a category generated by a directed graph.
\\

In this paper we show that the compact quantum group $SU_{q}(3)$ (as a $C^{*}$-algebra) is isomorphic to a $C^{*}$-algebra coming from a $2$-graph $\mathfrak{r}:\G\to\mathbb{N}^{2}$, whose $2$-skeleton (i.e. the set of objects in $\G$ together with the pre-images of morphisms $\mathfrak{r}^{-1}(1,0)$ and $\mathfrak{r}^{-1}(0,1)$) can be visualized by the colored directed graph
$$
\begin{tikzpicture}[>=latex',node distance = 1cm]'
\node at (0,0) (CZ) {};
\node at (2.5,3) (CY) {};
\node at (-2.5,3) (BZ) {};
\node at (-2.5,6) (BX) {};
\node at (2.5,6) (AY) {};
\node at (0,9) (AX) {};
\draw [dashed,->,very thick, blue] (CZ) to (CY);
\draw [->,very thick, red] (CZ) to (BZ);
\draw [dashed,->,very thick, blue] (BZ) to (BX);

\draw [->,very thick,red] (2.5-0.35+0.06,3+0.21+0.1) to  (-2.5+0.35+0.06,6-0.21+0.1);
\draw [dashed,->,very thick,blue] (2.5-0.35-0.06,3+0.21-0.1) to (-2.5+0.35-0.06,6-0.21-0.1);

\draw [->,very thick,red] (-2.21,3.31) to  (2.09,5.89);
\draw [dashed,->,very thick,blue] (-2.09,3.11) to (2.21,5.69);

\draw [->,very thick,red] (CY) to (AY);
\draw [dashed,->,very thick, blue] (AY) to (AX);
\draw [->,very thick, red] (BX) to (AX);

 \draw[->,very thick,red] (CZ) to[out=180,in=225,looseness=20] (CZ);
 \draw[dashed,->,very thick, blue] (CZ) to[in=-45,out=0,looseness=20]  (CZ);

 \draw[dashed,->,very thick,blue] (BX) to[out=90,in=90+45,looseness=20] (BX);
 \draw[->,very thick, red] (BX) to[in=90+100,out=90+45+100,looseness=20] (BX);

 \draw[->,very thick,red] (BZ) to[out=125,in=125+45,looseness=20] (BZ);
 \draw[dashed,->,very thick, blue] (BZ) to[in=125+100,out=125+45+100,looseness=20] (BZ);

 \draw[->,very thick,red] (CY) to[out=90+180,in=90+45+180,looseness=20] (CY);
 \draw[dashed,->,very thick, blue] (CY) to[in=90+100+180,out=90+45+100+180,looseness=20] (CY);

 \draw[dashed,->,very thick,blue] (AY) to[out=125+180,in=125+45+180,looseness=20]  (AY);
 \draw[->,very thick, red] (AY) to[in=125+100+180,out=125+45+100+180,looseness=20] (AY);

 \draw[->,very thick,red] (AX) to[in=-225,out=-180,looseness=20] (AX);
 \draw[dashed,->,very thick, blue] (AX) to[out=0,in=45,looseness=20]  (AX);

\draw [dashed,->,very thick, blue] (CZ) to (BX);
\draw [->,very thick, red] (CZ) to (AY);

\foreach \n in {AX,AY,BX,BZ,CY,CZ}
  \draw[black, thick] (\n) circle (0.1 cm);
\end{tikzpicture}
$$
We get this graph by passing to the limit $q\to 0$ for an set of $SU_{q}(3)$-generators under the usual faithful Soibelman representation. This is similar to how the graph of $SU_{q}(2)$ is derived, where one passes to the limit $q\to 0$ on appropriate generators, and then show that the resulting limits still generates the same $C^{*}$-algebra. However, due to the additional dimension, some further steps are needed. Under the Soibelman representation, we realize $SU_{q}(3)$ as a $C^{*}$-subalgebra of $C^{*}(S)^{\otimes 3}\otimes C(\mathbb{T})\otimes C(\mathbb{T})$ (here $C^{*}(S)$ denotes the $C^{*}$-algebra generated by the forward shift $S\in \mathcal{B}(\ell^{2}(\mathbb{N})),$ i.e. the Toeplitz algebra), and the issue is that these $C^{*}$-subalgebras varies as a function of $q$, even though they are all isomorphic (see~\cite{gnagy} or~\cite{giselsson} for the latter claim). In order to then get an isomorphism with the limit at $0,$ one must first straighten out the field of $C^{*}$-algebras. Unfortunately, this method is non-constructive and based on a lifting lemma, so that in comparison with the $SU_{q}(2)$-case, it's much less clear how the $2$-graph "sits" inside of $SU_{q}(3).$ 
\\

The topic of this paper connects with some similar recent investigations: In~\cite{roma}, M. Matasso and R. Yuncken showed that for any compact semisimple Lie group $K,$ if one looks at an appropriate $*$-subalgebra of its quantized coordinate ring $\mathcal{O}_{q}[K]$ (of elements regular at $q=0$). Then under the faithful Soibelman representation, the limit $q\to 0$ exists for every element in this subalgebra, and the resulting $*$-algebra can be described as a Kumjian-Pask algebra coming from a higher rank graph (with the rank of the graph equal the rank of $K$). In particular, in the case of $SU(3)$, Matasso and Yuncken derive the same 2-graph as we do here. Whereas our 2-graph was the result of empirical investigations, they derive theirs from the theory of crystal bases. It thus seems reasonable to expect that the results here can be generalized to hold for every compact semisimple Lie group $K.$ 
\\
Another recent approach to understand the limit $q=0$ in the case of $SU_{q}(n)$ was given by M. Giri and A.K. Pal in~\cite{gir}. They give a description of $SU_{0}(n)$ in terms of generators and relations. They focus in particular on the case $SU_{0}(3)$ and classifies its irreducible representations.
\begin{ack}
This work was supported by the RCN (Research Council of Norway), grant 300837.
\end{ack}
\subsection{Higher Rank Graph $C^{*}$-algebras}
Graph $C^{*}$-algebras of higher rank is a generalization of graph $C^{*}$-algebras that was introduced by Kumjian and Pask in~\cite{hrank}. 
We give the definition:
For a small category $\mathcal{C},$ we let $\mathcal{C}_{\mathrm{obj}}$ denote the set of objects of $\mathcal{C}$ and $\mathcal{C}_{\mathrm{arr}}$ the set of arrow of $\mathcal{C}.$
\begin{defn}[{\sc Graph of rank $k$}]\label{defng}
Consider the abelian semigroup $\mathbb{N}^{k}$ (we'll assume that $0\in \mathbb{N}$). Viewing $\mathbb{N}^{k}$ as a category with one object, with arrows the elements in $\mathbb{N}^{k}$, and with composition of arrows $\mathbf a,\mathbf b\in \mathbb{N}^{k}$ is the sum $\mathbf a+\mathbf b\in \mathbb{N}^{k}.$ 
A \textit{graph of rank $k$} (or $k$-graph for short) is a countable category $\F$ together with a functor $\mathfrak{r}:\F\to \mathbb{N}^{k}$, called the \textit{rank functor}, such that if $\mathfrak{r}(f)=\mathbf a+\mathbf b\in \mathbb{N}^{k},$ for $\mathbf a,\mathbf b\in\mathbb{N}^{k}$ then there are \textit{unique} arrows $g$ and $h$ such that $\mathfrak{r}(g)=\mathbf a$, $\mathfrak{r}(h)=\mathbf b$ and $f=g\circ h.$
\end{defn}
It's easy to see from the definitions that the pre-image $\mathfrak{r}^{-1}(0)$ is the set of identity arrows of $\F.$ Moreover, the category $\F$ has the property that if $f=g\circ h=g'\circ h$ then $g=g'$, and similary if $f=g\circ h=g\circ h'$, then $h=h'$, i.e. every arrow in $\F$ is both a monomorphism and an epimorphism.'


Following~\cite{hrank}, we introduce some notations and terminology about higher rank graphs:
\\

Let $\F$ be a graph of rank $k.$
\begin{itemize}
\item For $\mathbf{n}\in \mathbb{N}^{k},$ let $\F^\mathbf{n}:=\rk^{-1}(\mathbf{n}).$ It's easy to see that we can identify $\F^{0}$ with $\Fo.$
\item For $f\in \Fa$, let $r(f):=\mathrm{cod}(f)\in \Fo$ and $s(f):=\mathrm{dom}(f)\in \Fo$ (range and source maps).
\item For $E\subseteq \Fa$ and $f\in \Fa$, let $f E:=\{f\circ g\;|\;g\in E,\, r(g)=s(f)\}$ and similarly $Ef:=\{g\circ f\;|\; s(g)=r(f)\, g\in E\}.$ Moreover, for $v\in \Fo$ we write $vE$ for the set $\id_{v}\F=\{g\in E\;|\; r(g)=v\},$ i.e. the set of elements in $E$ with range $v.$ Similarly we write $Ev$ for the set $\{g\in E\;|\; s(g)=v\}$ 
\item If $f\in \Fa$ and $\mathbf{n}\leq \mathbf{m}\leq \mathbf{j}=\rk(f)$ in the partial ordering of $\mathbb{N}^{k},$ then we write $f(0,\mathbf n),$ $f(\mathbf n,\mathbf m)$ and $f(\mathbf m,\mathbf j)$ for the unique paths of degree $\mathbf n$, $\mathbf m-\mathbf n$ and $\mathbf j-\mathbf m$ respectively, such that $f=f(0,\mathbf n)\circ f(\mathbf n,\mathbf m)\circ f(\mathbf m,\mathbf j).$ 
\item We call a $k$-graph $\F$ \textit{row-finite} if $|v\F^{\mathbf n}|<\infty$ for all $\mathbf n\in \mathbb{N}^{k}$ and $v\in \Fo.$
\item For $\mathbf{n}=(n_{1},\dots,n_{k})\in\mathbb{N}^{k},$ we write $\F^{\leq \mathbf{n}}$ for the set
$$
\F^{\leq \mathbf{n}}=\{\text{$f\in \F \;|\; \rk(f)\leq \mathbf{n}$ and $\rk(f)_{i}< n_{i}\Longrightarrow s(f)\F^{e_{i}}=\emptyset$}\},
$$
where the subscript $i$ denote the $i$'th coordinate in $\mathbb{N}^{k}$ and $e_{i}\in \mathbb{N}^{k}$ is the element with a $1$ in the $i$'th coordinate and $0$ elsewhere.
\item We say that $\F$ is \textit{locally convex} if whenever $j\neq i$, $f\in\F^{e_{j}}$, $g\in \F^{e_{i}}$ and $r(f)=r(g),$ we have $s(f)\F^{e_{i}}\neq \emptyset$ and $s(g)\F^{e_{j}}\neq \emptyset.$
\end{itemize}
\begin{rem}
The higher rank graphs considered in this text will have the property that $v\F^{e_{i}}\neq\emptyset$ and $\F^{e_{i}}v\neq\emptyset,$ for every $v\in \Fo$ and $i=1,\dots,k.$ Thus, they will be locally convex, and we'll have $\F^{\leq \mathbf{n}}=\F^{\mathbf{n}}.$
\end{rem}

We give the definition of a higher-rank graph $C^{*}$-algebra:
\begin{defn}
Let $\F$ be a locally convex, row-finite graph of rank $k.$ The $C^{*}$-algebra $C^{*}(\F)$ is the universal enveloping $C^{*}$-algebra generated by orthogonal projections $\{P_{\sigma}\;|\; \sigma \in \Fo\}$ and partial isometries $\{S_{f}\;|\; f\in \Fa\}$ subject to the Cuntz-Krieger relations:
\begin{enumerate}[(CK1)]
\item The projections $\{P_{v}\;|\; v \in \Fo\}$ are all mutually orthogonal,
\item $S_{f}S_{g}=S_{f\circ g}$ whenever $s(f)=r(g),$
\item $S^{*}_{f}S_{f}=P_{s(f)}$ for all $f\in \Fa$, and
\item $P_{v}=\sum_{f\in v\F^{\leq \mathbf n}}S_{f}S_{f}^{*}$ for all $v\in \Fo$ and $\mathbf n\in\mathbb{N}^{k}.$
\end{enumerate}
\end{defn}

For a $k$-graph $\F,$ we have a natural homomorphism $\alpha:\mathbb{T}^{k}\to Aut(C^{*}(\F))$ determined on generators as 
\begin{equation}\label{alphaT}
\alpha_{z}(S_{f})=t^{\mathfrak{r}(f)}S_{f},
\end{equation}
where $t^{\mathfrak{r}(f)}=(t_{1}^{\mathfrak{r}(f)_{1}},\dots t_{k}^{\mathfrak{r}(f)_{k}})$ (with $t^{0}=1$). Clearly, by (CK2) and the functorial property of $\mathfrak{r}$ for every $t\in \mathbb{T}^{k},$ this map extends to an automorphism of $C^{*}(\F).$
\begin{defn}
The set of homomorphisms $\{\alpha_{t}\,|\,t\in \mathbb{T}^{k}\}$ is called the {\it gauge action} on $C^{*}(\F)$
\end{defn}

A useful result in the representation theory of higher rank graph $C^{*}$-algebras is the 'Gauge Invariance Uniqueness Theorem' due to A. Kumjian and D. Pask
\begin{thm}[Theorem 3.4 in~\cite{hrank}]\label{gauge}
Let $\F$ be a $k$-graph, let $B$ be a $C^{*}$-algebra, and $\pi:C^{*}(\F)\to B$ be a homomorphism. If we have an action $\beta:\mathbb{T}^{k}\to Aut(B)$ such that $\pi\circ\alpha_{t}=\beta_{t}\circ\pi $ for all $t\in \mathbb{T}^{k}.$ Then $\pi$ is faithful if and only if $\pi(P_{\sigma})\neq 0$ for all $\sigma\in\F^{0}.$
\end{thm}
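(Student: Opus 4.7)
The statement to prove is the Gauge Invariance Uniqueness Theorem. One direction is trivial: if $\pi$ is faithful, then since each $P_{\sigma}$ is a nonzero projection in $C^{*}(\F)$, its image $\pi(P_{\sigma})$ is nonzero. The substantive direction is the converse, and the plan is to run the standard averaging argument. First I would construct the faithful conditional expectation
\[
\Phi : C^{*}(\F) \longrightarrow C^{*}(\F)^{\alpha}, \qquad \Phi(a) = \int_{\mathbb{T}^{k}} \alpha_{t}(a)\, dt,
\]
onto the fixed point algebra of the gauge action, and similarly define $\Phi^{B}$ on $C^{*}(\pi(C^{*}(\F)))$ using $\beta$. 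The intertwining hypothesis $\pi \circ \alpha_{t} = \beta_{t} \circ \pi$ gives $\pi \circ \Phi = \Phi^{B} \circ \pi$, and faithfulness of $\Phi$ on positive elements is a standard fact about averaging continuous group actions. Given these two expectations, the theorem reduces to showing that $\pi$ is injective on $C^{*}(\F)^{\alpha}$: if $\pi(a)=0$ then $\pi(\Phi(a^{*}a)) = \Phi^{B}(\pi(a^{*}a)) = 0$, so $\Phi(a^{*}a)=0$ by injectivity on the fixed-point algebra, and then $a^{*}a=0$ by faithfulness of $\Phi$.

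The heart of the argument is therefore the analysis of $C^{*}(\F)^{\alpha}$. The plan is to identify it as an AF algebra exhausted by the finite-dimensional subalgebras
\[
\mathcal{A}_{\mathbf n} \;=\; \spann\bigl\{\, S_{f} S_{g}^{*} \;\big|\; f,g\in \F^{\leq \mathbf n},\ s(f)=s(g),\ r(f)=r(g)\,\bigr\},
\]
for $\mathbf n \in \mathbb{N}^{k}$. Using (CK2) and (CK3) one checks that each $\mathcal{A}_{\mathbf n}$ is a direct sum of matrix algebras indexed by pairs $(v,w)$ with $w\F^{\leq\mathbf n}v\neq\emptyset$, whose matrix units are the $S_{f}S_{g}^{*}$. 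Relation (CK4) then furnishes the inclusion $\mathcal{A}_{\mathbf n} \hookrightarrow \mathcal{A}_{\mathbf m}$ for $\mathbf n \le \mathbf m$ via $S_{f}S_{g}^{*} = \sum_{h\in s(f)\F^{\leq \mathbf m-\mathbf n}} S_{f\circ h}S_{g\circ h}^{*}$, giving the AF structure of $C^{*}(\F)^{\alpha} = \overline{\bigcup_{\mathbf n} \mathcal{A}_{\mathbf n}}$.

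Injectivity of $\pi$ on $C^{*}(\F)^{\alpha}$ then reduces to injectivity on each $\mathcal{A}_{\mathbf n}$, and this is precisely where the hypothesis $\pi(P_{\sigma})\neq 0$ is used: an ideal in a finite-dimensional $C^{*}$-algebra is determined by which matrix-block identities it contains, and the identity of the block at $(v,w)$ is a sum that dominates $P_{w}=\pi(P_{w})\neq 0$ (pulled up via (CK3)). Hence no block is killed, and injectivity on $\mathcal{A}_{\mathbf n}$ follows. Because AF injectivity can be checked on the dense union of $\mathcal{A}_{\mathbf n}$'s, this extends to the whole fixed point algebra.

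The main obstacle is the AF decomposition step: one must verify that the elements $S_{f}S_{g}^{*}$ really behave as matrix units and that the inclusion maps given by (CK4) are well defined under the locally convex condition. This requires using the uniqueness of factorizations built into Definition \ref{defng} to establish the orthogonality $(S_{f}S_{g}^{*})(S_{f'}S_{g'}^{*}) = \delta_{g,f'}\, S_{f}S_{g'}^{*}$ for $f,f',g,g' \in \F^{\leq \mathbf n}$ with matching sources, and to check that (CK4) indeed collapses the ambiguity in passing from level $\mathbf n$ to level $\mathbf m$. Once these combinatorial identities are in place, the averaging argument outlined above completes the proof.
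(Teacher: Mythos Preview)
The paper does not give its own proof of this theorem: it is quoted as Theorem~3.4 of Kumjian--Pask and used as a black box, so there is no proof in the paper to compare against. Your sketch is essentially the standard Kumjian--Pask argument (conditional expectation onto the gauge-fixed subalgebra, AF structure of that subalgebra, injectivity on each finite-dimensional level forced by $\pi(P_\sigma)\neq 0$), so in spirit you are reproducing the cited proof rather than diverging from it.

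One technical slip is worth fixing. In your definition of $\mathcal{A}_{\mathbf n}$ you impose $r(f)=r(g)$ in addition to $s(f)=s(g)$. Drop the range condition: the gauge-fixed algebra at level $\mathbf n$ is spanned by all $S_fS_g^*$ with $f,g\in\F^{\leq\mathbf n}$ and $s(f)=s(g)$, and the matrix blocks are indexed by the common \emph{source} $w$ alone, with matrix units running over $\F^{\leq\mathbf n}w$. With the extra range constraint your $\mathcal{A}_{\mathbf n}$ is too small to exhaust the fixed-point algebra, and the ``blocks indexed by pairs $(v,w)$'' picture is not the correct decomposition. Correspondingly, the minimal central projection of the block at $w$ is $\sum_{f\in\F^{\leq\mathbf n}w}S_fS_f^*$, and injectivity on that block follows because $\pi(S_f)^*\pi(S_f)=\pi(P_w)\neq 0$ for any such $f$. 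With this correction the outline goes through as in the original reference.
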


\section{$SU_{q}(3)$ as a Rank-2 Graph $C^{*}$-algebra}

The proof of the main result is divided into two steps:
\begin{enumerate}[$(i)$]
\item Give a proper definition of the $C^{*}$-algebra $$SU_{0}(3):=\lim_{q\to 0} \, SU_{q}(3),$$ and finding an appropriate 2-graph $\G$ such that $SU_{0}(3)\cong C^{*}(\G)$. 
\item Show that $SU_{0}(3)\cong SU_{0}(3)$ for all $0<q<1.$
\end{enumerate}
\begin{rem}
Step (i) is similar to what was done more generally in~\cite{roma}. The obstacle to generalizing the main result here is prove (ii).
\end{rem}
Here, we prove (i) by explicitly finding a set of partial isometries that generates $C_{0}(SU_{3})$, and that can be seen as the edges of a 2-graph. We then show that $C_{0}(SU_{3})$ is the universal $C^{*}$-representation for this graph. We then prove (ii) by essentially extending the proof from~\cite{giselsson} to the $q=0$ case.
\subsection{The Hopf $*$-algebra $\mathbb{C}[SU(n)]_{q}$}
Recall the definitions of the $*$-Hopf algebras $\mathbb{C}[SU(n)]_{q},$ for $q\in (0,1).$ It's defined by generators $\{t_{ij}\,|\, i,j=1,\dots,n\},$ along with a unit $I,$ subject to the relations
$$
\begin{array}{ccc}
t_{ij}t_{kl}-qt_{kl}t_{ij}=0, & \text{for $i=k$ and $
j<l$, or $i<k$ and  $j=l$,}\\
t_{i j}t_{k l}-t_{k l}t_{i j}=0,& \text{if $i<k$ and $j>l$,}\\
 t_{i j}t_{k l}-t_{kl}t_{ij}-(q-q^{-1})t_{il}
t_{kj}=0,& \text{for $i<k$ and $j<l$,}\\
\end{array}
$$
$$
\det \nolimits_q \mathbf{t}=I.
$$
Here $\det_q \mathbf{t}=\sum_{\sigma\in S_{n}}(-q)^{\ell(\sigma)}t_{1,\sigma(1)}\cdots t_{n,\sigma(n)}$ (where $S_{n}$ is the permutations of $n$ elements and $\ell$ the length of the permutation) is the  q-determinant of the matrix $\mathbf{t}=(t_{i,j})_{i,j=1}^{n}$.
 The comultiplication $\Delta$, the counit $\varepsilon$, the antipode $S$ and the involution $*$ are defined as follows
\begin{equation*}
\Delta(t_{ij})=\sum_k t_{ik}\otimes t_{kj},\quad \varepsilon(t_{ij})=\delta_{ij},\quad S(t_{ij})=(-q)^{i-j}\det\nolimits_q\mathbf{t}_{ji},
\end{equation*}
 and
\begin{equation*}\label{star5}
t_{ij}^*=(-q)^{j-i}\det \nolimits_q\mathbf{t}_{ij},
\end{equation*}
where ${\mathbf t}_{ij}$ is the matrix derived from ${\mathbf t}$ by discarding its $i$-th row and $j$-th column. 
\subsection{Representation Theory of $\mathbb{C}[SU(3)]_{q}$}
We give a quick rundown of the representation theory of $\mathbb{C}[SU(3)]_{q}$. The proofs of the statements here can be found in~\cite{KorS}.
\\

For two $\rho_{i}:\mathbb{C}[SU(3)]_{q}\to \mathcal{B}(\Hh_{i})$, $i=1,2$, we use the box-times notation to denote their tensor product 
$$
\rho_{1}\boxtimes \rho_{2}:=(\rho_{1}\otimes \rho_{2})\circ \Delta:\mathbb{C}[SU(3)]_{q}\to \mathcal{B}(\Hh_{1}\otimes \Hh_{2}).
$$

We give the usual construction of $\mathbb{C}[SU(3)]_{q}$-representations: Let $C_q,S,D_q:\ell^2(\mathbb{N})\to\ell^2(\mathbb{N})$ be the operators defined on the natural orthonormal basis $\{x_{j}\}_{j\in \mathbb{N}}$ as follows:
\begin{equation}\label{SC}
\begin{array}{cccc}
Sx_n=x_{n+1},& C_q x_n=\sqrt{1-q^{2n}}x_n,& D_q x_n=q^n x_n.
\end{array}
\end{equation}

By~\cite{wor}, the map
\begin{equation}\label{SC1}
\begin{array}{cccc}
\Pi_{q}(t_{11})=S^*C_q,&  \Pi_{q}(t_{12})=q D_q,&  \Pi_{q}(t_{21})=-D_q,&  \Pi_{q}(t_{22})=C_q S
\end{array}
\end{equation}
extends to a $*$-representation of $\mathbb C[SU(2)]_q.$ Let $C^{*}(S)\subseteq \mathcal{B}(\ell^{2}(\mathbb{N}))$ be the $C^{*}$-algebra generated by $S$ (so that $C^{*}(S)$ is the Toeplitz algebra). From the expressions~\eqref{SC} for $C_{q}$ and $D_q$ it is easy to see that both $C_{q},D_{q}\in C^{*}(S)$ and hence that $$\Pi_{q}(\mathbb C[SU(2)]_q)\subseteq C^{*}(S).$$
For $i=1,2,$ we have the two homomorphims $\mathbb{C}[SU(3)]_{q}\overset{\vartheta_{i}}{\to} \mathbb{C}[SU(2)]_{q}$ corresponding respectively to the two embeddings 
$$
\left[\begin{array}{ccc}
SU(2) & 0\\ 0 & 1
\end{array}\right],\,
\left[\begin{array}{ccc}
1 & 0\\ 0& SU(2)
\end{array}\right]\subseteq SU(3),
$$
and that is determined on the generators as
$$
\vartheta_{i}(t_{jk})=
\begin{cases}
t_{(j-i+1)(k-i+1)}, & \text{for $j,k\in\{1+i-1,2+i-1\}$}\\
\delta_{jk}I, & \text{otherwise.}
\end{cases}
$$

For $i=1,2,$ we define two representations
\begin{equation}\label{pii}
\pi_{i}=\Pi\circ \vartheta_{i}:\mathbb{C}[SU(3)]_{q}\to C^{*}(S).
\end{equation}

Moreover, we have two natural homomorphisms $\tau_{i}:\mathbb{C}[SU(3)]_{q}\to C(\mathbb{T})$ for $i=1,2$ determined by composition
$$
\tau_{i}:\mathbb{C}[SU(3)]_{q}\overset{\vartheta_{i}}{\to}C^{*}(S)\overset{\mathcal{K}\sim  }{\to} C^{*}(S)/\mathcal{K}\cong C(\mathbb{T}),
$$
using the standard isomorphism $C^{*}(S)/\mathcal{K}\cong C(\mathbb{T})$, where $\mathcal{K}\subseteq \mathcal{B}(\ell^{2}(\mathbb{N}))$ denotes the compact operators. To be specific, we consider the image of $S$ under the homomorphism $C^{*}(S)\to C(\mathbb{T})$ to be the coordinate function $z.$ Thus, as $D_{q}$ and $C_{q}-I$ are compact, we have (using the letter $I$ again for the identity in $C(\mathbb{T})$) 
$$
\begin{array}{cccc}
\tau_{1}(t_{11})=z, & \tau_{1}(t_{22})=\bar{z}, & \tau_{1}(t_{33})=I,\\
\tau_{2}(t_{11})=I, & \tau_{2}(t_{22})=z, & \tau_{2}(t_{33})=\bar{z},
\end{array}
$$
and $\tau_{i}(t_{kj})=0$ for all other indices.
\\

 The homomorphism $\tau_{1}\boxtimes \tau_{2}:\mathbb{C}[SU(3)]_{q}\to C(\mathbb{T})\otimes C(\mathbb{T})$ corresponds to the maximal torus restriction $ \mathbb{T}^{2}\subseteq SU(3)$. We denote the universal enveloping $C^{*}$-algebra of $\mathbb{C}[SU(3)]_{q}$ by $SU_{q}(3).$ From the representation theory, we get that $SU_{q}(3)$ is isomorphic to the closure of $\mathbb{C}[SU(3)]_{q}$ under the faithful representation
\begin{equation}\label{uni}
\Xi_{q}:= (\pi_{1}\boxtimes \pi_{2}\boxtimes \pi_{1})\boxtimes (\tau_{1}\boxtimes \tau_{2}):\mathbb{C}[SU(3)]_{q}\to C^{*}(S)^{\otimes 3}\otimes C(\mathbb{T})\otimes C(\mathbb{T}).
\end{equation}
In this paper, we will usually identify $\mathbb{C}[SU(3)]_{q}$ with $\im\, \Xi_{q}$ and the $C^{*}$-algebra $SU_{q}(3)$ with its closure. This is to be able to simultaneously consider all these algebras as sub-algebras of $C^{*}(S)^{\otimes 3}\otimes C(\mathbb{T})\otimes C(\mathbb{T}).$
\subsection{The Algebras $\lim_{q\to 0}\mathbb{C}[SU(3)]_{q}$ and $\lim_{q\to 0}SU_{q}(3)$}
To make sense of the $C^{*}$-algebras $\mathbb{C}[SU(3)]_{0}$ and $SU_{0}(3)$ in the limit $q\to 0,$ we want to take an appropriate set of generators of $\mathbb{C}[SU(3)]_{q}$ where the limits $q\to 0$ under the representation~\eqref{uni} actually exists. We then define $\mathbb{C}[SU(3)]_{0}$ as $*$-algebra that these limits generates, and $SU_{0}(3)$ its $C^{*}$-closure. 
\\

Let $U_{q}(\mathfrak{su}(3))$ be the Jimbo-Drinfeld $q$-deformation of the universal algebra of $\mathfrak{su}(3),$ and let $V_{+}$ denote the set of dominant weights. For $\lambda\in V_{+},$ we let $M_{\lambda}^{q}$ denote the $U_{q}(\mathfrak{su}(3))$-module with heighest weight corresponding to $\lambda.$ We let $\omega_{i}\in V_{+},$ for $i=1,2,$ denote the fundamental weights, ordered such that
$$
\begin{array}{ccc}
\omega_{1}=(1,-\frac{1}{2},-\frac{1}{2}), & \omega_{2}=(\frac{1}{2},\frac{1}{2},-1).
\end{array}
$$
As $\dim\, M_{\omega_{i}}^{q}=3,$ for $i=1,2,$ we let $\{\xi_{\omega_{i}}^{1},\xi_{\omega_{i}}^{1},\xi_{\omega_{i}}^{1}\}\subseteq M_{\omega_{i}}^{q}$ be an orthonormal basis, such that $\xi_{\omega_{i}}^{1}$ generates the heighest weight space, $\xi_{\omega_{i}}^{3}$ generates the lowest.
\\
Using the identification of $\mathbb{C}[SU(3)]_{q}$ with the reduced dual of $U_{q}(\mathfrak{su}(3))$, we consider the elements in $\mathbb{C}[SU(3)]_{q}$ defined as 
\begin{equation}\label{asd}
\begin{array}{cccc}
C_{j}^{\omega_{i}}(\cdot)=\langle \cdot\; \xi_{\omega_{i}}^{j},\xi_{\omega_{i}}^{1}\rangle , &i=1,2 ,& j=1,2,3.
\end{array}
\end{equation}
By (~\cite{KorS}, Theorem 2.2.1), the elements~\eqref{asd} generates $\mathbb{C}[SU(3)]_{q}$ as a $*$-algebra. Moreover, by perhaps multiplying these element with an appropriate value in $\mathbb{T},$ it's not hard to see that one can express these elements in the usual set of generators $t_{ij}$ as
\begin{equation}\label{generators}
\begin{array}{ccc}
C_{j}^{\omega_{1}}=t_{j1}, & C_{j}^{\omega_{2}}=q^{(j-3)}t_{(4-j)3}^{*}.
\end{array}
\end{equation} 
We omit the proof of this statement, as we will do explicit calculations involving the right-hand sides of~\eqref{generators}. It's a simple calculation to show that these generates $\mathbb{C}[SU(3)]_{q}$ as a $*$-algebra. The formula~\eqref{asd} is there to give a more structural understanding of whats going on. We will thus assume that~\eqref{generators} holds, and consider it as the definition of $C_{j}^{\omega_{i}}.$
\\

Let $P\in \mathcal{B}(\ell^{2}(\mathbb{N}))$ denote the orthogonal projection onto the subspace generated by $e_{0}$, and we let $Q=I-P.$ Moreover, let $z\in C(\mathbb{T})$ be the coordinate function. Taking $q\to 0,$ we have limits in norm (we spare the reader the standard -- but tedious -- calculations)
\begin{align}\label{a}
\Xi_{q}(C_{1}^{\omega_{2}})^{*}&\leadsto  I\otimes S\otimes I\otimes z\otimes I&=\hat A\\
\Xi_{q}(C_{2}^{\omega_{2}})^{*}&\leadsto   I\otimes P\otimes S\otimes z\otimes I&=\hat B\\
\Xi_{q}(C_{3}^{\omega_{2}})^{*}&\leadsto  I\otimes P\otimes P\otimes z\otimes I&=\hat C
\end{align}

\begin{align}
\Xi_{q}(C_{1}^{\omega_{1}})^{*}&\leadsto  S\otimes I\otimes S\otimes I\otimes z&= \hat{X} \\
\Xi_{q}(C_{2}^{\omega_{1}})^{*}&\leadsto  S\otimes I\otimes P\otimes I\otimes z+ P\otimes S\otimes S^{*}\otimes I\otimes z&=\hat Y\\
\Xi_{q}(C_{3}^{\omega_{1}})^{*}&\leadsto  P\otimes P\otimes I\otimes I\otimes z&=\hat Z \label{z}
\end{align}

These are all partial isometries, as 
\begin{gather*}
\hat A^{*}\hat A= I\otimes I\otimes I\otimes I\otimes I, 
\qquad
\hat A\hat A^{*}= I\otimes Q\otimes I\otimes I\otimes I,\\
 \hat B^{*}\hat B= I\otimes P\otimes I\otimes I\otimes I,
\qquad
\hat B\hat B^{*}=I\otimes P\otimes Q\otimes I\otimes I,\\
\hat C^{*}\hat C=\hat C\hat C^{*}= I\otimes P\otimes P\otimes I\otimes I,
\end{gather*}
\begin{gather*}
\hat X^{*}\hat X=  I \otimes I\otimes I\otimes I\otimes I, 
\qquad
\hat X\hat X^{*}= Q\otimes I\otimes Q\otimes I\otimes I,\\
\hat Y^{*}\hat Y= I\otimes I\otimes P\otimes I\otimes I+  P\otimes I\otimes Q\otimes I\otimes I,\\
\hat Y\hat Y^{*}= Q\otimes I\otimes P\otimes I\otimes I+ P\otimes Q\otimes I\otimes I\otimes  I,\\
\hat Z^{*}\hat Z=\hat Z\hat Z^{*}= P \otimes P \otimes I\otimes I\otimes I.
\end{gather*}
Recalling that a partial isometry $v$ is called \textit{quasinormal} if $vv^{*}\leq v^{*}v,$ we see from the above that $A,B,C,X,Y,Z$ are indeed quasinormal partial isometries.
\\
Moreover, we see that $$\hat A\hat A^{*}+\hat B\hat B^{*}+\hat C\hat C^{*}=I=\hat X\hat X^{*}+\hat Y\hat Y^{*}+\hat Z\hat Z^{*}.$$ 
\begin{defn}
We denote the $*$-algebra generated by the limits~\eqref{a}- \eqref{z} by $\mathbb{C}[SU(3)]_{0}$, and denote its norm-closure by $SU_{0}(3)$.
\end{defn}

The right-action $\beta_{t}$, for $t=(t_{1},t_{2})\in \mathbb{T}^{2}$ of the maximal torus on $C_{j}^{\omega_{i}}$, is induced by $\beta_{t}(C_{j}^{\omega_{i}})=t_{i}C_{j}^{\omega_{i}}$. Taking any non-commutative polynomials $F$ in the variables $C_{j}^{\omega_{i}},(C_{j}^{\omega_{i}})^{*}$, we get from the limit
$$
\lim_{q\to 0}|| \Xi_{q}(F)||=\lim_{q\to 0}|| \Xi_{q}(\beta_{t}(F))||,
$$
 (since $F$ was arbitrary) that $\beta_{t}$ extends to an isormorphism of $SU_{0}(3)$ induced by the actions 
\begin{equation}\label{betagen}
\begin{array}{ccc}
  \{\hat A,\hat B,\hat C\}\mapsto \{t_{1}\hat A,t_{1}\hat B,t_{1}\hat C\},&\{\hat X,\hat Y,\hat Z\}\mapsto \{t_{2}\hat X,t_{2}\hat Y,t_{2}\hat Z\},& \text{for $t=(t_{1},t_{2}).$}
\end{array}
\end{equation}
From~\eqref{betagen} it follows that $\beta_{ts}=\beta_{t}\circ \beta_{s}.$ Moreover, as the map $t\in \mathbb{T}^{2}\mapsto \beta_{t}(\hat{R})$ is norm-continous when $\hat{R}\in\{\hat{A},\hat{B},\hat{C},\hat{X},\hat{Y},\hat{Z}\}$, we get from an standard approximation argument that also $t\mapsto \beta_{t}(a)$ is norm-continuous for every $a\in SU_{0}(3).$ Thus we have an injective homomorphism $\beta_{t}:\mathbb{T}^{2}\to Aut(SU_{0}(3))$ that is point-norm continuous.
By using the natural identification 
$$\mathcal{T}^{\otimes 3}\otimes C(\mathbb{T})\otimes C(\mathbb{T})\cong C(\mathbb{T}^{2}:\mathcal{T}^{\otimes 3}),$$
(i.e. continuous functions $\mathbb{T}^{2}\to \mathcal{T}^{\otimes 3}$) we see that for $f(z_{1},z_{2})\in SU_{q}(3)\subseteq C(\mathbb{T}^{2}:\mathcal{T}^{\otimes 3}),$ and all $q\in[0,1)$, we have
\begin{equation}\label{beta}
\beta_{t}(f(z_{1},z_{2}))=f(t_{1}z_{1},t_{2}z_{2}).
\end{equation}
Thus $\beta$ is actually the restriction of the $\mathbb{T}^{2}$-action $(f(z_{1},z_{2}))\mapsto f(t_{1}z_{1},t_{2}z_{2})$ on the ambient $C^{*}$-algebra (identified with) $C(\mathbb{T}^{2}:\mathcal{T}^{\otimes 3})$. We can thus, with a slight abuse of notation, use the same symbol $\beta$ to denote all these group actions.
\subsection{The Colored Graph of $SU_{0}(3)$}
We make the projections that are supposed to be the nodes of our graph labeled by the set $\{AX,AY,BX,BZ,CY,CZ\}$

\begin{align}\label{proj1}
\hat P_{CZ}=& \hat{C}\hat{C}^{*}\hat Z\hat Z^{*}= P\otimes P\otimes P\otimes I\otimes I,\\ 
\hat P_{CY}=& \hat C\hat C^{*}\hat Y\hat Y^{*}= Q\otimes P\otimes P\otimes I\otimes I,\\
\hat P_{BZ}=&\hat  B\hat B^{*}\hat Z\hat Z^{*}= P\otimes P\otimes Q\otimes I\otimes I,\\
\hat P_{BX}=&\hat B\hat B^{*}\hat X\hat X^{*}= Q\otimes P\otimes Q\otimes I\otimes I, \\
\hat P_{AY}=&\hat A\hat A^{*}\hat Y\hat Y^{*}= Q\otimes Q\otimes P\otimes I\otimes I+ P\otimes Q\otimes I\otimes I\otimes  I,\\
\hat P_{AX}=&\hat A\hat A^{*}\hat X\hat X^{*}= Q\otimes Q\otimes Q\otimes I\otimes I. \label{proj6}
\end{align}

A quick calculation shows that these projections actually adds up to the identity.
\\

We will construct a directed $2$-colored graph $\mathcal{G}=(V,E,r,s)$ for which these projections will correspond to the vertices of. The vertice will be labelled by the corresponding index in the projection. Thus the  $V=\{AX, AY, BX,BZ, CY,CZ\}.$
The set of edges $E$ is defined and labelled by $\{ A, B ,C,X,Y,Z\}$ in the following way: There is directed edge $$v_{1}\overset{e}{\to} v_{2},$$ labelled by $e\in \{A,B,C,X,Y,Z\}$  if we have
\begin{equation}\label{ev}
\hat P_{v_{2}}\hat{e}\hat  P_{v_{1}}\neq 0. 
\end{equation}
 Moreover, we will color the edges either {\color{red} red} or {\color{blue} blue} (dashed) depending on if the label takes values in the sets $\{A,B,C\}$ or $\{X,Y,Z\}$ respectively. It's easily verifiable that resulting graph $\mathcal{G}$ looks as follows:
\begin{equation}\label{2graph}
\begin{tikzpicture}[>=latex',node distance = 1.5cm]'
\node at (0,0) (CZ) {CZ};
\node at (2.5,3) (CY) {CY};
\node at (-2.5,3) (BZ) {BZ};
\node at (-2.5,6) (BX) {BX};
\node at (2.5,6) (AY) {AY};
\node at (0,9) (AX) {AX};
\draw [dashed,->,very thick, blue] (CZ) to node [below,xshift= 0.3cm]{Y}  (CY);
\draw [->,very thick, red] (CZ) to  node [below,xshift= -0.3cm]{B}  (BZ);
\draw [dashed,->,very thick, blue] (BZ) to node[left] {X}  (BX);

\draw [->,very thick,red] (2.5-0.35+0.06,3+0.21+0.1) to node[near start,above,xshift=0.1cm] {B}  (-2.5+0.35+0.06,6-0.21+0.1);
\draw [dashed,->,very thick,blue] (2.5-0.35-0.06,3+0.21-0.1) to node[near start,below,xshift=-0.1cm] {X}  (-2.5+0.35-0.06,6-0.21-0.1);

\draw [->,very thick,red] (-2.21,3.31) to node[near start,above,xshift=-0.1cm] {A}  (2.09,5.89);
\draw [dashed,->,very thick,blue] (-2.09,3.11) to node[near start,below,xshift=0.1cm] {Y}  (2.21,5.69);

\draw [->,very thick,red] (CY) to  node[right] {A} (AY);
\draw [dashed,->,very thick, blue] (AY) to node[right,xshift=0.1cm]{X}  (AX);
\draw [->,very thick, red] (BX) to node[left,xshift=-0.1cm]{A} (AX);

 \draw[dashed,->,very thick,blue] (CZ) to[out=180,in=225,looseness=6] node[left] {Z} (CZ);
 \draw[->,very thick, red] (CZ) to[in=-45,out=0,looseness=6] node[right] {C} (CZ);

 \draw[dashed,->,very thick,blue] (BX) to[out=90,in=90+45,looseness=6] node[above] {X} (BX);
 \draw[->,very thick, red] (BX) to[in=90+100,out=90+45+100,looseness=6] node[below] {B} (BX);

 \draw[->,very thick,red] (BZ) to[out=125,in=125+45,looseness=6] node[above] {B} (BZ);
 \draw[dashed,->,very thick, blue] (BZ) to[in=125+100,out=125+45+100,looseness=6] node[below] {Z} (BZ);

 \draw[->,very thick,red] (CY) to[out=90+180,in=90+45+180,looseness=6] node[below] {C} (CY);
 \draw[dashed,->,very thick, blue] (CY) to[in=90+100+180,out=90+45+100+180,looseness=6] node[above] {Y} (CY);

 \draw[dashed,->,very thick,blue] (AY) to[out=125+180,in=125+45+180,looseness=6] node[below] {Y}  (AY);
 \draw[->,very thick, red] (AY) to[in=125+100+180,out=125+45+100+180,looseness=6] node[above] {A} (AY);

 \draw[dashed,->,very thick,blue] (AX) to[in=-225,out=-180,looseness=6] node[left] {X} (AX);
 \draw[->,very thick, red] (AX) to[out=0,in=45,looseness=6] node[right] {A} (AX);

\draw [dashed,->,very thick, blue] (CZ) to node[near start,right,xshift=0.1cm]{X}  (BX);
\draw [->,very thick, red] (CZ) to node[near start,left,xshift=-0.1cm]{A} (AY);

\foreach \n in {AX,AY,BX,BZ,CY,CZ}
  \draw[black, thick] (\n) circle (0.3 cm);
\end{tikzpicture}
\end{equation}
This will be the blueprint for the $2$-graph that we construct below. There will be further relations in the compositions of the operators $\hat A,\dots, \hat Z$ not captured by the graph~\eqref{2graph}. As an example, it's not hard to see that $\hat Y\hat B=\hat A\hat Z.$ The same holds for all admissible compositions of arrows labelled such in the $2$-graph category. 
\begin{rem}
This text is quite heavy on the calculations, and all throughout it we will use~\eqref{2graph} as a calculatory tool.
\end{rem}
\subsection{Construction of the 2-graph $\mathcal{Su}(3)$}
We will use the methods from Section 6 in~\cite{hrank} to construct the $2$-graph. The basis for their contruction is a directed colored graph, such that the transition matrices commutes for different colors, together with further commutation rules whenever there is ambiguity in the arrow composition. 
\\

Let us consider the complex vector space $\mathbb{C}^{6}.$ We identify the canonical basis vectors $v_{1},\dots,v_{6}$ with the vertices of~\eqref{2graph} in the following way:
$$
\begin{array}{cccccc}
1=CZ, & 2=BZ, & 3=CY,& 4=BX,& 5=AY, &6=AX.
\end{array}
$$
We define the transition matrices $M_{R}$ and $M_{B}$ ($R$ for red and $B$ for blue) such that $M_{R}$ has a $1$ at the $(i,j)$ index if in~\eqref{2graph}, we have a red arrow $i\to j$ in the corresponding vertices. Similar for $M_{B}$. With this definition, we have
\begin{equation}\label{matrb}
\begin{array}{cc}
M_{R}=\left[\begin{array}{cccccc}1&0&0&0&0&0\\1&1&0&0&0&0\\0&0&1&0&0&0\\1&1&1&1&0&0\\0&0&1&0&1&0\\0&0&0&0&1&1\end{array}\right],&
M_{B}=\left[\begin{array}{cccccc}1&0&0&0&0&0\\0&1&0&0&0&0\\1&0&1&0&0&0\\0&1&0&1&0&0\\1&1&1&0&1&0\\0&0&0&1&0&1\end{array}\right].
\end{array}
\end{equation}
A calculation gives that
\begin{equation}\label{comrb}
M_{R}M_{B}=M_{B}M_{R}=\left[\begin{array}{cccccc}1&0&0&0&0&0\\1&1&0&0&0&0\\1&0&1&0&0&0\\2&2&1&1&0&0\\2&1&2&0&1&0\\1&1&1&1&1&1\end{array}\right]
\end{equation}
so that these matrices fulfill the requirements in [Section 6, in~\cite{hrank}]. We see that in~\eqref{comrb}, we have $4$ indices with $2$'s in them, and we need to resolve this ambiguity.
\\
Let us define
$$
A_{RB}^{ij}=\{(a,x)\,|\,\text{$a$ a red arrow, $x$ blue, such that $r(a)=i,$ $s(a)=r(x)$ and $s(x)=j$}\}.
$$
and we define $A_{BR}^{ij}$ similarly, but with the colors switched. From~\eqref{comrb}, we deduce that the sets $A_{RB}^{ij}$ and $A_{BR}^{ij}$ contains the same number of elements for all indicies $i,j$. For each $i,j$, we will construct a bijection
$$
\phi_{ij}:A_{BR}^{ij}\to A_{RB}^{ij}.
$$
It is clear from~\eqref{comrb} that we only need to specify $\phi_{ij}$ for $i,j=(1,4),(1,5),(2,4),(3,5),$ i.e. for
\begin{equation}\label{phimap1}
\begin{array}{cccc}
(CZ)\to (AY), & (BZ)\to (AY),
\end{array}
\end{equation}
\begin{equation}\label{phimap2}
\begin{array}{ccc}
(CZ) \to (BX), & (CY)\to (BX).
\end{array}
\end{equation}
We get these  from the relations satisfied by the operators~\eqref{a}-\eqref{z}  as (for the arrows with the labels)
\begin{equation}\label{phimap}
\begin{array}{cccc}
Z\circ A=B\circ Y, & Y\circ A=A\circ Y, &\text{for $\phi_{15},\phi_{35}$ (the arrows from~\eqref{phimap1})},\\
Y\circ B=C\circ X, & X\circ B=B\circ X, & \text{for $\phi_{14},\phi_{24}$ (the arrows from~\eqref{phimap2}).}
\end{array}
\end{equation}
From section 6 in~\cite{hrank}, we now get
\begin{prop}\label{su3graph}
This input-data uniquely determines a $2$-graph $\G$.
\end{prop}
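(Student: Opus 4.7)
The plan is to invoke directly the construction of Section 6 in \cite{hrank}, which takes as input a $k$-colored directed graph whose color-transition matrices pairwise commute, together with bijections between the two factorizations of each length-two mixed-color path (the ``commutation squares''), and outputs a unique $k$-graph. For $k=2$ this data suffices; any additional cubical compatibility among three-color triples---which would be required when $k\geq 3$---is vacuous here, so no further conditions need to be checked.

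First I would verify that the skeleton \eqref{2graph} satisfies the commutation hypothesis. The identity \eqref{comrb} shows $M_{R}M_{B}=M_{B}M_{R}$, so for every ordered pair of vertices $(i,j)$ the number of blue-red paths from $j$ to $i$ equals the number of red-blue paths from $j$ to $i$; i.e.\ $|A_{BR}^{ij}|=|A_{RB}^{ij}|$. The four entries of \eqref{comrb} equal to $2$ are the only positions where factorization is ambiguous, and the bijections $\phi_{14},\phi_{15},\phi_{24},\phi_{35}$ specified in \eqref{phimap} explicitly match blue-red with red-blue factorizations at each of these four pairs. All other squares involve singleton sets on both sides, where the bijection is forced, so the commutation data is complete and internally consistent.

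Finally I would appeal to the quotient construction of \cite{hrank}: form the free path category on the skeleton \eqref{2graph} and impose the square relations encoded by the $\phi_{ij}$'s. The unique-factorization property required by Definition \ref{defng} then follows from the normal-form argument of \cite{hrank}, whereby iterated application of the square relations rewrites each morphism canonically as a red segment followed by a blue segment, and this form is unique. Uniqueness of $\G$ up to isomorphism is the universal property of this quotient. I do not anticipate a substantial obstacle: the proof is essentially bookkeeping plus a citation, and the only point needing care is the immediate verification that the four relations in \eqref{phimap} really furnish honest bijections on two-element sets, which is a finite case check against \eqref{matrb} and \eqref{comrb}.
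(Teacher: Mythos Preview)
Your proposal is correct and matches the paper's approach exactly: the paper simply cites Section~6 of \cite{hrank} without further argument, so your more detailed unpacking of that citation---verifying $M_RM_B=M_BM_R$, noting that the bijections $\phi_{ij}$ are forced except at the four entries equal to $2$, and invoking the quotient construction---is if anything more thorough than what the paper provides.
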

Thus $\G$ is the $2$-graph with verties the nodes in~\eqref{2graph}, the blue and red arrows the sets $\mathfrak{r}^{-1}(1,0)$ and $\mathfrak{r}^{-1}(0,1)$ respectively, and where we have the additional relations between the red and blue arrows determined by~\eqref{phimap}. 
\subsection{Proof that $C^{*}(\G)\cong SU_{0}(3)$}
We define operators $A,B,C,X,Y,Z\in C^{*}(\G)$ by letting these be the sum of all the partial isometries in $C^{*}(\G)$ corresponding to the edges in~\eqref{2graph} labelled respectively. We denote the orthogonal projections in $C^{*}(\G)$ coming from the vertices by $P_{v},$ with $v\in \{AX,AY,BX,BZ,CY,CZ\}.$
\begin{lem}\label{gens}
The elements $A,..,Z$ is generating $C^{*}(\G).$
\end{lem}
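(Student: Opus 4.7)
The plan is to show that the $C^*$-subalgebra of $C^*(\G)$ generated by $A,B,C,X,Y,Z$ contains every vertex projection $P_v$ and every edge partial isometry $S_f$ from $\G^{e_1}\cup\G^{e_2}$. Once these are in hand, the Cuntz--Krieger relation (CK2) together with the unique factorisation property of a $2$-graph automatically places every higher-degree partial isometry inside the same $C^*$-subalgebra.

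\textbf{Recovering the vertex projections.} Each letter is a sum of degree-$e_1$ (red) or degree-$e_2$ (blue) partial isometries whose ranges are mutually orthogonal subprojections of various $P_v$. A short argument from (CK1)--(CK4) gives $S_f^*S_g=\delta_{fg}P_{s(f)}$ for $f,g\in\G^{e_i}$, so if $L\in\{A,B,C,X,Y,Z\}$ is a sum over a family $F_L\subset\G^{e_i}$, then
\begin{equation*}
L^*L=\sum_{f\in F_L}P_{s(f)}.
\end{equation*}
Inspection of~\eqref{2graph} shows that the sources of the $A$-labelled red edges exhaust all six vertices, giving $A^*A=I$; similarly $B^*B=P_{CZ}+P_{BZ}+P_{CY}+P_{BX}$ and $C^*C=P_{CZ}+P_{CY}$, so $A^*A-B^*B$, $B^*B-C^*C$ and $C^*C$ are the three orthogonal projections grouping the vertices by their first coordinate. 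The analogous computation with $X,Y,Z$ yields three orthogonal projections grouping the vertices by their second coordinate. Each $P_v$ is then obtained as a product of the two corresponding projections (for instance $P_{AX}=(A^*A-B^*B)(X^*X-Y^*Y)$ and $P_{CZ}=C^*C\cdot Z^*Z$).

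\textbf{Recovering the edge partial isometries.} With the $P_v$ at our disposal, for any $f\in\G^{e_i}$ with label $L$ the product $P_{r(f)}LP_{s(f)}$ equals the sum of $S_g$ over all $L$-labelled edges of the same colour running from $s(f)$ to $r(f)$. Reading~\eqref{2graph} one verifies that between any ordered pair of vertices there is at most one edge with a given label, so $P_{r(f)}LP_{s(f)}=S_f$. Finally, for any $h\in\G^{(n_1,n_2)}$ the factorisation property of the $2$-graph writes $h$ as an iterated composition of arrows of degree $e_1$ and $e_2$, and (CK2) expresses $S_h$ as the corresponding product of already recovered partial isometries. Consequently every Cuntz--Krieger generator of $C^*(\G)$ lies in the $C^*$-subalgebra generated by $\{A,B,C,X,Y,Z\}$, proving the lemma.

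The only nontrivial step is the combinatorial check that for each ordered pair of vertices and each label there is at most one arrow of that label between them; this is a finite inspection of~\eqref{2graph}, and it is what allows $P_{r(f)}LP_{s(f)}$ to single out an individual $S_f$ rather than a sum.
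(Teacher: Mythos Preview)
Your argument is correct and follows essentially the same strategy as the paper: recover the vertex projections from the six operators, then sandwich each label between vertex projections to isolate individual edge partial isometries. The only difference is cosmetic. You work with the \emph{initial} projections $L^*L$, which forces you to take differences such as $A^*A-B^*B$ before multiplying. The paper instead uses the \emph{final} projections $LL^*$ and exploits the observation that in~\eqref{2graph} every arrow with range $v$ of a given colour carries the same label; hence (CK4) gives directly
\[
AA^*=P_{AX}+P_{AY},\quad BB^*=P_{BX}+P_{BZ},\quad\ldots,\quad ZZ^*=P_{BZ}+P_{CZ},
\]
so that for $m\in\{A,B,C\}$ and $n\in\{X,Y,Z\}$ one has $mm^*\,nn^*=P_{mn}$ without any subtractions. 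This is slightly slicker, but your route via $L^*L$ is equally valid and arguably more robust, since it does not rely on the special feature that the range vertex determines the label.
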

\begin{proof}
Note that all the red or blue arrows with range a particular vertex has the same labels. By inspection of the graph~\eqref{2graph}, and (CK4), we have
\begin{equation}\label{projs}
\begin{array}{ccc}
AA^{*}=P_{AX}+P_{AY}\\
BB^{*}=P_{BX}+P_{BZ}\\
CC^{*}=P_{CY}+P_{CZ}\\
\\
XX^{*}=P_{AX}+P_{BX}\\
YY^{*}=P_{AY}+P_{CY}\\
ZZ^{*}=P_{BZ}+P_{CZ}.
\end{array}
\end{equation}
From the list~\eqref{projs}, we see that with $m\in\{A,B,C\}$ and $n\in \{X,Y,Z\}$, we have $mm^{*}nn^{*}=P_{mn}.$ It follows that we can recover all the edges in~\eqref{2graph}.
\end{proof}
Unsurprisingly, this is the same formula for the projection as~\eqref{proj1}-~\eqref{proj6}. We also see from~\eqref{projs} that
$$
AA^{*}+BB^{*}+CC^{*}=I=XX^{*}+YY^{*}+ZZ^{*}.
$$
\begin{lem}\label{relations}
$A,B,C,X,Y,Z$ are all quasinormal partial isometries. In particular, $A$ and $X$ are isometries. Moreover, the following relations hold
\begin{equation}\label{rel}
\begin{array}{ccc}
BA=B^{*}A=0, & YA=AY, & CA=C^{*}A=0\\
ZB=BZ, &YB=AZ,& ZA=Z^{*}A=0, \\
 CB=C^{*}B=0, & AX=XA, & A X^{*}=X^{*}A.\\
\\
YX=Y^{*}X=0, & BX=XB, & ZX=Z^{*}X=0\\
CY=YC, &BY=XC,& CX=C^{*}X=0, \\
 &ZY=Z^{*}Y=0,&
\end{array}
\end{equation}
\end{lem}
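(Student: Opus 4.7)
The plan is to work directly with the defining sums $R = \sum_{f:\ \mathrm{label}(f)=R} S_f$ and to exploit the Cuntz--Krieger relations (CK1)--(CK4) together with the explicit structure of the graph \eqref{2graph}.

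First I verify that each $R \in \{A,B,C,X,Y,Z\}$ is a quasinormal partial isometry. The summands have mutually orthogonal initial projections $S_f^*S_f = P_{s(f)}$ by (CK3), since distinct edges sharing a label have distinct sources in \eqref{2graph}; and they have mutually orthogonal final projections $S_fS_f^*$ (within a fixed range by (CK4) applied to the relevant color, and across different ranges trivially). Summing the source projections gives $R^*R = \sum_{f}P_{s(f)}$. A direct inspection of \eqref{2graph} shows that every vertex is the source of a unique $A$-edge and a unique $X$-edge, so $A^*A = X^*X = I$, while for $R \in \{B,C,Y,Z\}$ the source set strictly contains the range set recorded in \eqref{projs}, giving $RR^* \leq R^*R$.

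The vanishing relations reduce to a disjointness check. A product $RS$ requires composable pairs $(f,g)$ with $s(f)=r(g)$, so $RS=0$ whenever the sources of $R$-edges are disjoint from the ranges of $S$-edges; this handles $BA$, $CA$, $ZA$, $CB$, $YX$, $ZX$, $CX$, $ZY$. Relations $R^*S=0$ follow from $S_f^*S_g = S_f^*P_{r(f)}P_{r(g)}S_g$, which vanishes whenever the ranges of $R$- and $S$-edges are disjoint, disposing of the starred analogues.

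For the commutation identities without adjoints I invoke the 2-graph factorization property: every rank-$(1,1)$ arrow of $\G$ has a unique red--blue and a unique blue--red factorization, and by (CK2) both produce the same partial isometry. I organize each pair of sums (e.g.\ $YA$ versus $AY$) by source--target position. At each position where $(M_RM_B)_{ij} \leq 1$ in \eqref{comrb} the identification is forced, while at the four positions marked with a $2$ the explicit relations of \eqref{phimap} apply. The latter translate directly into the operator identities $YB = AZ$, $YA = AY$, $BY = XC$, and $BX = XB$; the remaining commutations $ZB=BZ$, $CY=YC$, and $AX=XA$ involve only unambiguous positions and so follow at once from the factorization property.

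The main obstacle is the adjoint identity $AX^* = X^*A$, which does not come from any 2-graph identification. The plan here is to derive it from the already established $AX = XA$ together with the isometry property $X^*X = I$: left-multiplying $AX = XA$ by $X^*$ gives $X^*AX = A$, and right-multiplying by $X^*$ then yields $AX^* = X^*A(XX^*) = X^*A(P_{AX}+P_{BX})$. It remains to check that $X^*A(P_{AY}+P_{BZ}+P_{CY}+P_{CZ}) = 0$: for each $v \in \{AY, BZ, CY, CZ\}$ the unique $A$-edge out of $v$ has range $AY$, but no $X$-edge has range $AY$, so $X^*AP_v = 0$ and the identity follows.
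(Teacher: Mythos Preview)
Your argument is correct and follows essentially the same route as the paper: quasinormality from the source/range inclusions in the graph, the vanishing relations from disjointness of the relevant source and range sets, and the non-starred commutations from the $2$-graph factorization rules \eqref{phimap} together with (CK2). One small wording slip: for $C$ and $Z$ the source and range sets coincide rather than being strictly contained, but this still gives $RR^{*}\le R^{*}R$ (with equality), so the conclusion stands.

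For the residual identity $AX^{*}=X^{*}A$ you and the paper take dual but equivalent routes. The paper observes that $\ker A^{*}$ (the span of $P_{BX},P_{BZ},P_{CY},P_{CZ}$) is $X$-invariant and then manipulates $A^{*}(AX)A^{*}$; you instead show that $\ker X^{*}$ (the span of $P_{AY},P_{BZ},P_{CY},P_{CZ}$) is $A$-invariant, by noting that every $A$-edge out of those four vertices has range $AY$, which is not the range of any $X$-edge. Both arguments reduce to the same diagram check and yield the identity equally cleanly.
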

\begin{proof}
Notice that by the symmetry of the graph~\eqref{2graph} and the relations~\eqref{phimap}, we can switch $A,B,C$ with $X,Y,Z$. By this argument, the relations below the gap in~\eqref{rel} follows from the ones above. 
\\

Let us first show the claims of quasinormality and isometry. We know from Lemma~\ref{gens} that $A,\dots,Z$ are partial isometries. Moreover, as every vertex is a source vertex for an $A$ or $X$ arrow, it follows that $A$ and $X$ are actually isometries. Quasinormality (i.e. $vv^{*}\leq v^{*}v$) is easy to see from (CK3) and (CK4), as, for example in the case of $Y$, the set of range vertices for $Y$-labelled arrows is a subset of of the source vertices. Similarly for the others.
\\

Assume for this paragraph that $A,..,Z$ again denotes the arrow-labels in~\eqref{2graph}. The definition of $\G$, in particular~\eqref{phimap}, shows that $YA=AY$, $ZB=BZ$, $YB=AZ$ and $AX=XA$ for all composable arrows with these labels. Some diagram-inspection shows that the same is true for the corresponding operators.
\\

We deduce that $BA=B^{*}A=CA=C^{*}A=ZA=Z^{*}A=0$ from the fact that in~\eqref{2graph}, no arrow with label $A$ has its range on any vertex that is the domain or range of an arrow labeled either $B,$ $C,$ or $Z.$ We have $CB=C^{*}B=0$ for the same reason.
\\

Lets prove the last relation $A X^{*}=X^{*}A$. Some diagram chasing of~\eqref{2graph} gives that the projection onto $\ker\, A^{*}$ is given by $P=P_{BX}+P_{BZ}+P_{CY}+P_{CZ}.$ By~\eqref{2graph}, this subspace is invariant under $X.$ It follows that $$PXP=XP\Rightarrow (I-P)X(I-P)=(I-P)X.$$ As $AA^{*}=I-P,$ we get from $AX=XA$ that 
$$XA^{*}=A^{*}(AX)A^{*}=A^{*}(XA)A^{*}=A^{*}X(I-P).$$ Then
$$
A^{*}X(I-P)=A^{*}AA^{*}X(I-P)=A^{*}(I-P)X(I-P)=A^{*}(I-P)X=A^{*}X.
$$
\end{proof}
Assume that we have a Hilbert space $\mathrm{H}$ and a faithful representation $$\rho:C^*(\G)\to \mathcal{B}(\mathrm{H}).$$
Let us denote $\mathcal E=\rho(P_{CZ})\mathrm H.$ To make things appear more concise, we will supress $\rho$ and write $x$ instead of $\rho(x)$ for $x\in C^{*}(\G).$ This is safe as $\rho$ was assumed to be faithful.
\\

By the conditions (CK3)-(CK4), we have that $\mathcal{E}$ is invariant under the operators $C$ and $Z,$ and moreover, the restrictions to this subspace are unitary.
Thus, let us denote $\tilde{C}:=C|_{\mathcal{E}}$ and $\tilde{Z}:=Z|_{\mathcal{E}}.$
Clearly $\tilde{C},\tilde{Z}\in \mathcal{B}(\mathcal{E})$ are commuting unitary operators. Moreover as the representation is assumed to be faithful, it follows from the gauge action that we have an isomorphism $C^{*}(\tilde{C},\tilde{Z})\cong C(\mathbb{T})\otimes C(\mathbb{T}),$ and moreover that this isomorphism can be taken so that $\tilde{C}\mapsto z\otimes I$ and $\tilde{Z}\mapsto I\otimes z.$
\\

Consider the closed subspaces
\begin{equation}\label{aby}
\begin{array}{ccc}
\mathcal{E}(\mathbf{k})=A^{k}B^{m}Y^{j}\tilde{C}^{-(k+m)}\tilde{Z}^{-j}\mathcal{E}, & \text{for $\mathbf{k}= (j,k,m)\in \mathbb{N}^{3}.$}
\end{array}
\end{equation}
Note that the negative exponents are not a problem as $\tilde{C}$ and $\tilde{Z}$ are unitary.

\begin{lem}
We have $\mathcal{E}(\mathbf{k})\bot \mathcal{E}(\mathbf{n})$ for $\mathbf{k}\neq \mathbf{n}$, and a unitary isometry
\begin{equation}\label{uni}
W:\oplus_{\mathbf{k}\in \mathbb{N}^{3}}  \mathcal{E}(\mathbf{k})\to  \ell^{2}(\mathbb{N})^{\otimes 3}\otimes \mathcal{E},
\end{equation}
given by the formula
\begin{equation}\label{map}
\begin{array}{cccc}
A^{k}B^{m}Y^{j}\tilde{C}^{-(k+m)}\tilde{Z}^{-j}x\mapsto  e_{j}\otimes e_{k}\otimes e_{m}\otimes x, & \text{for $x\in \mathcal{E}.$}
\end{array}
\end{equation}
\end{lem}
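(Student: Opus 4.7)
The plan is to reduce the two assertions (pairwise orthogonality and unitarity of $W$) to the single claim that $T_\mathbf{k} := A^k B^m Y^j \tilde{C}^{-(k+m)} \tilde{Z}^{-j}$ restricts to an isometry $\mathcal{E} \to \mathrm{H}$ for each $\mathbf{k} = (j,k,m) \in \mathbb{N}^3$, with mutually orthogonal images $T_\mathbf{k}\mathcal{E} = \mathcal{E}(\mathbf{k})$. Once this is established, the orthogonal direct sum $\bigoplus_\mathbf{k} \mathcal{E}(\mathbf{k})$ is isometrically identified with $\ell^2(\mathbb{N})^{\otimes 3} \otimes \mathcal{E}$ via the formula~\eqref{map}: the map $W$ is well-defined because each element of $\mathcal{E}(\mathbf{k})$ has a unique preimage in $\mathcal{E}$ under the isometry $T_\mathbf{k}$, it is isometric by combining orthogonality of the $\mathcal{E}(\mathbf{k})$ with $T_\mathbf{k}^*T_\mathbf{k} = I_\mathcal{E}$, and it is surjective because its image contains the spanning set $\{e_j \otimes e_k \otimes e_m \otimes x : \mathbf{k}\in\mathbb{N}^3,\, x\in\mathcal{E}\}$.

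For the isometry $T_\mathbf{k}^* T_\mathbf{k} = I_\mathcal{E}$, since $\tilde{C}$ and $\tilde{Z}$ are commuting unitaries on $\mathcal{E}$, the factor $\tilde{C}^{-(k+m)}\tilde{Z}^{-j}$ is a unitary $\mathcal{E} \to \mathcal{E}$, so it suffices to check that $A^k B^m Y^j$ is isometric on $\mathcal{E}$. I would trace the image through the graph~\eqref{2graph}: starting in $\mathcal{E} = P_{CZ}\mathrm{H}$, a single $Y$ lands in $P_{CY}\mathrm{H}$ (via the arrow $CZ\to CY$) and subsequent $Y$'s loop there, while $B$ then sends $P_{CY}\mathrm{H}$ into $P_{BX}\mathrm{H}$ (or $P_{CZ}\mathrm{H}$ into $P_{BZ}\mathrm{H}$ when $j=0$) with subsequent $B$'s looping. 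All intermediate vertices ($CZ,CY,BZ,BX$) appear in $Y^*Y = P_{AY}+P_{CY}+P_{CZ}+P_{BZ}$ and $B^*B = P_{BZ}+P_{CZ}+P_{BX}+P_{CY}$, so $Y^j$ and $B^m Y^j$ are isometric from $\mathcal{E}$. Combined with $A^*A=I$ from Lemma~\ref{relations}, this yields $T_\mathbf{k}^* T_\mathbf{k} = I_\mathcal{E}$.

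The main work is the orthogonality $\langle T_\mathbf{k} x, T_\mathbf{n} y\rangle = 0$ for $\mathbf{k}\neq\mathbf{n}$, which I would organize case-by-case, assuming WLOG $k_1 \leq k_2$. Setting $x' := \tilde{C}^{-(k_1+m_1)}\tilde{Z}^{-j_1}x \in \mathcal{E}$ and $y'$ analogously and using $(A^*)^{k_1} A^{k_2} = A^{k_2-k_1}$, three cases arise. If $k_1 < k_2$ and $m_1 \geq 1$, the factor $(B^*)^{m_1} A^{k_2-k_1}$ vanishes directly by $B^*A = 0$ from Lemma~\ref{relations}. If $k_1 < k_2$ and $m_1 = 0$, then $A^{k_2-k_1} B^{m_2} Y^{j_2} y'$ lies in $P_{AX}\mathrm{H}+P_{AY}\mathrm{H}$ while $Y^{j_1}x'$ lies in $P_{CZ}\mathrm{H}+P_{CY}\mathrm{H}$, so the inner product vanishes at the level of vertex projections. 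If $k_1 = k_2$ and $m_1 < m_2$, using $A^*A = I$ and the isometry of $B$ on $P_{BZ}\mathrm{H}+P_{BX}\mathrm{H}$ (to reduce $(B^*)^{m_1}B^{m_2}Y^{j_2}y' = B^{m_2-m_1}Y^{j_2}y'$) reduces the inner product to $\langle(B^*)^{m_2-m_1}Y^{j_1}x', Y^{j_2}y'\rangle$; this vanishes because no $B$-arrow in~\eqref{2graph} has range $CZ$ or $CY$, forcing $B^*P_{CZ} = B^*P_{CY} = 0$. The final subcase $k_1=k_2$, $m_1=m_2$, $j_1 < j_2$ reduces similarly to $\langle (Y^*)^{j_2-j_1}x', y'\rangle$, which vanishes because $Y^*P_{CZ} = 0$ (no $Y$-arrow has range $CZ$).

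The main obstacle is bookkeeping: essentially every step uses both that $B$ and $Y$ are isometries on specific source-vertex projections (turning $(B^*)^m B^m$ and $(Y^*)^j Y^j$ into identities on intermediate images) and the vanishing relations $B^*A=0$, $B^*P_{CZ}=B^*P_{CY}=0$, $Y^*P_{CZ}=0$, all of which are immediate from the combinatorics of~\eqref{2graph} and from Lemma~\ref{relations}. Once this bookkeeping is in place, each case is a short chain of identities, but the case separation is essential because different pairs $(\mathbf{k},\mathbf{n})$ have their ranges land in different vertex subspaces.
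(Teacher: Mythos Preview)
Your proof is correct. Both your argument and the paper's rest on the same underlying facts---that $A,B,Y$ act as isometries on the relevant vertex subspaces, and that the graph combinatorics force the necessary vanishing---but they are organized differently. The paper invokes the Wold--von Neumann decomposition once for each of $A,B,Y$: it identifies the wandering subspace $\mathcal{E}_R=(R^*R-RR^*)\mathrm{H}$ for each $R\in\{A,B,Y\}$, so that $R^m\mathcal{E}_R\perp R^n\mathcal{E}_R$ for $m\neq n$ automatically, and then checks the chain of inclusions $\mathcal{E}\subseteq\mathcal{E}_Y$, $Y^k\mathcal{E}\subseteq\mathcal{E}_B$, $B^k\mathcal{E}_B\subseteq\mathcal{E}_A$. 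This packages the orthogonality into a single structural principle applied three times, avoiding any explicit case split on the indices. Your approach unpacks this by hand: the case analysis on $(k_1,k_2)$, then $(m_1,m_2)$, then $(j_1,j_2)$ is exactly what Wold--von Neumann is doing behind the scenes, and your vanishing inputs $B^*A=0$, $B^*P_{CZ}=B^*P_{CY}=0$, $Y^*P_{CZ}=0$ are equivalent to the wandering-subspace inclusions. Your route is more elementary (no named decomposition theorem) at the cost of more bookkeeping; the paper's is more conceptual but leans on an external result. One small point: you only treat $m_1<m_2$ and $j_1<j_2$ explicitly in the equal-$k$ cases, but the reverse inequalities follow by symmetry, which you should state.
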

\begin{proof}
As $\tilde{C}$ and $\tilde{Z}$ are unitary, we can safely ignore these factors in~\eqref{aby} when we prove the orthogonality and non-zero-ness of the subspaces $\mathcal{E}(\mathbf{k})$. 
An inspection of the graph gives that
\begin{equation}\label{isos}
\begin{array}{ccc}
A^{*}A=I,& AA^{*}=P_{AX}+P_{AY},\\
B^{*}B=P_{BX}+P_{BZ}+P_{CY}+P_{CZ} ,& BB^{*}=P_{BX}+P_{BZ},\\
Y^{*}Y=P_{AY}+P_{BZ}+P_{CY}+P_{CZ} ,& YY^{*}=P_{AY}+P_{CY}.
\end{array}
\end{equation}
If follows from this that $A,B,Y$ are isometries when restricted to the ranges of $A^{*}A,B^{*}B,Y^{*}Y,$ respectively (note that these subspaces also reduces their respective operator).  Moreover, the Wold–von Neumann decomposition gives that with $\mathcal{E}_{A}=(I-P_{AX}-P_{AY})\mathrm{H},$ $\mathcal{E}_{B}=(P_{CY}+P_{CZ})\mathrm{H}$, and $\mathcal{E}_{Y}=(P_{AY}+P_{CY})\mathrm{H},$ that for $R\in \{A,B,Y\}$ and all $k\in \mathbb{N}$ we have a unitary operator
$$
\begin{array}{cccc}
R:R^{k}\mathcal{E}_{R}\to R^{k+1}\mathcal{E}_{R},
\end{array}
$$
and moreover
$$
\begin{array}{ccc}
R^{m}\mathcal{E}_{R}\bot R^{j}\mathcal{E}_{R}, & \text{for $m\neq j.$}
\end{array}
$$
An inspection of~\eqref{2graph} gives that we have
$$
\begin{array}{cccc}
 \mathcal{E}\subseteq \mathcal{E}_{Y},& Y^{k}\mathcal{E}\subseteq \mathcal{E}_{B}, & 
 B^{k}\mathcal{E}_{B}\subseteq \mathcal{E}_{A}, & \text{for all $k\in \mathbb{N}.$}
\end{array}
$$
From this we can then deduce that for all $\mathbf{k}=(j,k,m)\in \mathbb{N}^{3},$ the map 
$$
A^{k}B^{m}Y^{j}:\mathcal{E}\to \mathcal{E}(\mathbf{k})
$$
is unitary, and moreover that $\mathcal{E}(\mathbf{k})\bot \mathcal{E}(\mathbf{n})$ for $\mathbf{k}\neq \mathbf{n}$. In particular, it follows that $W$ determined by~\eqref{map} is unitary.
\end{proof}

\begin{prop}\label{form}
The subspace $\oplus_{\mathbf{k}\in \mathbb{N}^{3}}  \mathcal{E}(\mathbf{k})$ is reducing the images of $A,..,Z.$ Moreover, if $W$ is the isomorphism~\eqref{uni}, then 
\begin{equation}\label{formulas}
\begin{array}{ccc}
WAW^{*}= I\otimes S\otimes I\otimes \tilde{C},\\
WBW^{*}= I\otimes P\otimes S\otimes \tilde{C},\\
WCW^{*}= I\otimes P\otimes P\otimes \tilde{C}, \\
\\
WXW^{*}=S\otimes I\otimes S\otimes \tilde{Z},\\
WYW^{*}= S\otimes I\otimes P\otimes \tilde{Z}+ P\otimes S\otimes S^{*}\otimes \tilde{Z},\\
WZW^{*}= P\otimes P\otimes I\otimes \tilde{Z}.
\end{array}
\end{equation}
\end{prop}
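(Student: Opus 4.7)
The plan is to apply each generator $R\in\{A,B,C,X,Y,Z\}$ (and its adjoint) to a typical spanning vector
\[
\xi_{j,k,m,x}\;:=\;A^{k}B^{m}Y^{j}\,\tilde{C}^{-(k+m)}\tilde{Z}^{-j}\,x, \qquad x\in\mathcal{E},
\]
of $\mathcal{E}(j,k,m)$, and use the relations of Lemma~\ref{relations} together with the unitarity of $\tilde{C},\tilde{Z}$ on $\mathcal{E}$ to rewrite $R\xi_{j,k,m,x}$ as another vector of the same form (or as zero). Combined with $W\xi_{j,k,m,x}=e_{j}\otimes e_{k}\otimes e_{m}\otimes x$, this simultaneously establishes invariance of $\bigoplus_{\mathbf{k}}\mathcal{E}(\mathbf{k})$ under $R$ and verifies the displayed formulas \eqref{formulas}.

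The actions of the four generators $A,B,C,Z$ follow by direct application of \eqref{rel}. For $A$, the only change is $k\mapsto k+1$, yielding $A\xi_{j,k,m,x}=\xi_{j,k+1,m,\tilde{C}x}$, which matches $I\otimes S\otimes I\otimes\tilde{C}$. For $B$, the relation $BA=0$ forces $B\xi_{j,k,m,x}=0$ unless $k=0$, in which case it equals $\xi_{j,0,m+1,\tilde{C}x}$. For $C$, the vanishings $CA=CB=0$ reduce to the case $k=m=0$, after which $CY=YC$ (iterated to $CY^{j}=Y^{j}C$) pushes $C$ to the right of $Y^{j}$, where it acts on $\mathcal{E}$ as $\tilde{C}$. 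For $Z$, the combination $ZA=0$, $ZB=BZ$, $ZY=0$ isolates the single non-trivial case $k=j=0$, where $Z\xi_{0,0,m,x}=\xi_{0,0,m,\tilde{Z}x}$ upon using $Z|_{\mathcal{E}}=\tilde{Z}$.

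The formula for $Y$ splits into two cases according to the index $m$. If $m=0$, the relation $YA=AY$ gives $Y\xi_{j,k,0,x}=\xi_{j+1,k,0,\tilde{Z}x}$, accounting for the summand $S\otimes I\otimes P\otimes\tilde{Z}$. If $m\geq 1$, one first derives $YB^{m}=AB^{m-1}Z$ by iterating $YB=AZ$ together with $ZB=BZ$; substituting this into $Y\xi_{j,k,m,x}$ produces a factor $ZY^{j}$, which is zero for $j\geq 1$ by $ZY=0$ and for $j=0$ yields $\xi_{0,k+1,m-1,\tilde{Z}x}$, accounting for the summand $P\otimes S\otimes S^{*}\otimes\tilde{Z}$. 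The action of $X$ is the technical heart of the argument; the key identity is
\[
X Y^{j} C \;=\; B Y^{j+1}\qquad\text{for all }j\geq 0,
\]
proved by induction: the base $XC=BY$ is in Lemma~\ref{relations}, and the inductive step multiplies $XY^{j-1}C=BY^{j}$ on the right by $Y$ and applies $CY=YC$. Evaluating this on $\tilde{C}^{-1}y$ for $y\in\mathcal{E}$ gives $XY^{j}y=BY^{j+1}\tilde{C}^{-1}y$, and then $XA=AX$ and $XB=BX$ allow $X$ to be commuted through $A^{k}B^{m}$, producing $\xi_{j+1,k,m+1,\tilde{Z}x}$ in agreement with $S\otimes I\otimes S\otimes\tilde{Z}$.

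For reducibility one further needs each adjoint $R^{*}$ to preserve $\bigoplus_{\mathbf{k}}\mathcal{E}(\mathbf{k})$; this is checked by parallel computations. Each $R^{*}$ either decreases one of $j,k,m$ by one (introducing a compensating $\tilde{C}^{-1}$ or $\tilde{Z}^{-1}$ on the $\mathcal{E}$-factor) or annihilates the vector, the vanishing cases always reducing to orthogonality of vertex projections (for example, $B^{*}A=0$ because the ranges of $A$ and $B$ lie in disjoint vertex subspaces, and $C^{*}Y^{j}x=Y^{j}\tilde{C}^{-1}x$ for $x\in\mathcal{E}$ using $CY^{j}=Y^{j}C$). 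The main technical obstacle is establishing the identity $XY^{j}C=BY^{j+1}$; once this is in hand, the rest of the proof is systematic bookkeeping with \eqref{rel} and the commutativity of $\tilde{C}$ with $\tilde{Z}$ on $\mathcal{E}$.
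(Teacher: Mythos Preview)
Your forward computations for $A,B,C,Z,Y,X$ on $\xi_{j,k,m,x}$ are correct and essentially coincide with the paper's; your identity $XY^{j}C=BY^{j+1}$ is just a repackaging of the paper's trick of inserting $C\tilde{C}^{-1}$ before applying $XC=BY$.

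The gap is in your treatment of the adjoints, which you dismiss as ``parallel computations''. First, your description is simply wrong: $X^{*}$ decreases \emph{two} indices ($j$ and $m$) simultaneously, and $Y^{*}$ can \emph{increase} $m$ (it sends $\xi_{0,k,m,x}$ with $k\geq 1$ to $\xi_{0,k-1,m+1,\tilde{Z}^{-1}x}$). Second, two of these computations are genuinely non-routine. For $X^{*}$ one must commute $X^{*}$ past $A^{k}$, and this needs the relation $AX^{*}=X^{*}A$, which is \emph{not} the adjoint of $XA=AX$ (that gives $A^{*}X^{*}=X^{*}A^{*}$); it is proved separately in Lemma~\ref{relations} from the $X$-invariance of $\ker A^{*}$, and you should invoke it explicitly. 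For $Y^{*}$ no analogue $AY^{*}=Y^{*}A$ is available and a direct computation does not go through. The paper handles $Y^{*}$ indirectly: having already shown invariance under $X,X^{*},Z,Z^{*}$ (and $B,B^{*},C,C^{*}$), one gets invariance under $YY^{*}=I-XX^{*}-ZZ^{*}$, and then under $Y^{*}Y$ via the identity $Y^{*}Y=YY^{*}+CC^{*}ZZ^{*}+BB^{*}ZZ^{*}$; combining this with the explicit action of $Y$ on the subspace (and quasinormality of $Y$) yields invariance under $Y^{*}$. This indirect step is the real content missing from your sketch, and without it the reducing claim is unproved.
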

\begin{proof}
We'll use the notation 
$$
\begin{array}{cccc}
[x](j,k,m):=A^{k}B^{m}Y^{j}\tilde{C}^{-(k+m)}\tilde{Z}^{-j}x, & \text{for $x\in\mathcal{E}$.}
\end{array}
$$
By using the list of relations~\eqref{rel} from Lemma~\eqref{relations}, it's simple to calculate the actions of $A,..,Z$ and $A^{*}-Z^{*}$ on this element as:
$$
A[x](j,k,m)=[\tilde{C}x](j,k+1,m)
$$
$$
A^{*}[x](j,k,m)=\begin{cases}0, & \text{if $i=0$}\\ [\tilde{C}^{*}x](j,k-1,m), & \text{otherwise}\end{cases},
$$
$$
B[x](j,k,m)=\begin{cases} 0, & \text{if $k>0$}\\ [\tilde{C}x](j,0,m+1), & \text{otherwise}\end{cases}
$$
$$
B^{*}[x](j,k,m)=\begin{cases} 0, & \text{if $k>0$ or $m=0$}\\ [\tilde{C}^{*}x](j,0,m-1), & \text{otherwise}\end{cases}
$$
$$
C[x](j,k,m)=\begin{cases} 0, & \text{if $k,m>0$}\\ [\tilde{C}x](j,0,0), &\text{otherwise}\end{cases}
$$
$$
C^{*}[x](j,k,m)=\begin{cases} 0, & \text{if $k,m>0$}\\ [\tilde{C}^{*}x](j,0,0), &\text{otherwise}\end{cases}
$$
$$
X[x](j,k,m)=XA^{k}B^{m}Y^{j}\tilde{C}^{-(k+m)}\tilde{Z}^{-j}x=A^{k}B^{m}XY^{j}\tilde{C}^{-(k+m)}\tilde{Z}^{-j}x=
$$
$$
=A^{k}B^{m}(XC)Y^{j}\tilde{C}^{-(k+m+1)}\tilde{Z}^{-j}x=A^{k}B^{m}(BY)Y^{j}\tilde{C}^{-(k+m+1)}\tilde{Z}^{-(j+1)}\tilde{Z}x=
$$
$$
=[\tilde{Z}x](j+1,k,m+1),
$$
\begin{equation}\label{calc}
X^{*}[x](j,k,m)=X^{*}A^{k}B^{m}Y^{j}\tilde{C}^{-(k+m)}\tilde{Z}^{-j}x=A^{k}X^{*}B^{m}Y^{j}\tilde{C}^{-(k+m)}\tilde{Z}^{-j}x.
\end{equation}
We break down the calculation of the right-hand side of~\eqref{calc} into three cases: 
\begin{enumerate}[(i)]
\item if $m=0$, then $A^{k}X^{*}Y^{j}\tilde{C}^{-k}\tilde{Z}^{-j}x=0,$
\item if $j=0$, then $A^{k}X^{*}B^{m}\tilde C^{-(k+m)}x=A^{k}(X^{*}Z)B^{m}\tilde{C}^{-(k+m)}\tilde{Z}^{-1}x=0,$
\item if $j,m>0,$ then 
$$
A^{k}X^{*}B^{m}Y^{j}\tilde{C}^{-(k+m)}\tilde{Z}^{-j}x=A^{k}X^{*}B^{m-1}(BY)Y^{j-1}\tilde{C}^{-(k+m)}\tilde{Z}^{-j}x=
$$
$$
=A^{k}X^{*}B^{m-1}(XC)Y^{j-1}\tilde{C}^{-(k+m)}\tilde{Z}^{-j}x=A^{k}(X^{*}X) B^{m-1}Y^{j-1}\tilde{C}^{-(k+m-1)}\tilde{Z}^{-(j-1)}\tilde{Z}^{*}x=
$$
$$
=A^{k} B^{m-1}Y^{j-1}\tilde{C}^{-(k+m-1)}\tilde{Z}^{-(j-1)}\tilde{Z}^{*}x=[\tilde{Z}^{*} x](j-1,k,m-1).
$$
\end{enumerate}
It's easy to see (again using~\eqref{rel}) that the action by $Z,Z^{*}$ is given as
$$
Z [x](j,k,m)=\begin{cases} 0, &\text{if $j,k>0$}\\ [\tilde{Z}x](0,0,m) , & \text{otherwise}\end{cases}
$$
$$
Z^{*} [x](j,k,m)=\begin{cases} 0, &\text{if $j,k>0$}\\ [\tilde{Z}^{*}x](0,0,m) , & \text{otherwise}\end{cases}.
$$
To calculate the action of $Y,$ we break this into the cases $m=0$ and $m>0$. We then have
\begin{enumerate}[(1)]
\item if $m=0,$ then 
$$
Y[x](j,k,0)=YA^{k}Y^{j} \tilde{C}^{-k}\tilde{Z}^{-j}x=A^{k}Y^{j+1}\tilde{C}^{-k}\tilde{Z}^{-(j+1)}\tilde{Z}x=[\tilde{Z} x](j+1,k,0)
$$
\item if $m>0$, then 
$$
YA^{k}B^{m}Y^{j}\tilde{C}^{-(k+m)}\tilde{Z}^{-j}x=A^{k}(YB)B^{m-1}Y^{j}\tilde{C}^{-(k+m)}\tilde{Z}^{-j}x=
$$
$$
=A^{k}(AZ)B^{m-1}Y^{j}\tilde{C}^{-(k+m)}\tilde{Z}^{-j}x=A^{k+1}B^{m-1}ZY^{j}\tilde{C}^{-(k+m)}\tilde{Z}^{-j}x=
$$
$$
=\begin{cases} 0, & \text{if $j>0$}\\ A^{k+1}Z B^{m-1}\tilde{C}^{-(k+m)}\tilde{Z}x=[\tilde{Z} x](0,k+1,m-1),&\text{if $j=0$}.\end{cases}
$$
\end{enumerate}
We have shown that $\oplus_{\mathbf{k}\in \mathbb{N}^{3}}  \mathcal{E}(\mathbf{k})$ is an invariant subspace for $XX^{*}$ and $ZZ^{*}$. As $XX^{*}+YY^{*}+ZZ^{*}=I$ it follows that its invariant under $YY^{*}$ as well.  Moreover, a calculation using ~\eqref{2graph} gives that $$Y^{*}Y=P_{CZ}+P_{CY}+P_{BZ}+P_{AY}=YY^{*}+CC^{*}ZZ^{*}+BB^{*}ZZ^{*},$$ and thus by what we have proven, the subspace $\oplus_{\mathbf{k}\in \mathbb{N}^{3}}  \mathcal{E}(\mathbf{k})$ reduces $Y^{*}Y$ as well. Since we have that $Y$ is a quasinormal partial isometry, if follows that $\oplus_{\mathbf{k}\in \mathbb{N}^{3}}  \mathcal{E}(\mathbf{k})$ is also invariant under $Y^{*}.$ The formulas~\eqref{formulas} now follows from the above calculations.
\end{proof}
\begin{prop}\label{iso0}
We have an isomorphim $\phi:C^{*}(\G)\to SU_{0}(3)$ that intertwines the gauge action $C^{*}(\G)$ with the right-action on $SU_{0}(3).$
\end{prop}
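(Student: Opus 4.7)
The plan is to use Proposition \ref{form}, which computes how any faithful representation of $C^*(\G)$ acts on a canonical reducing subspace up to unitary equivalence, in order to produce a surjective $*$-homomorphism $\phi : C^*(\G) \to SU_0(3)$, and then deduce injectivity from the gauge-invariance uniqueness theorem (Theorem \ref{gauge}).

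Concretely, I would fix a faithful representation $\rho : C^*(\G) \to \mathcal{B}(\mathrm{H})$ and let $\sigma$ denote the compression of $\rho$ to the reducing subspace $\mathcal{H}_0 = \bigoplus_{\mathbf{k} \in \mathbb{N}^3} \mathcal{E}(\mathbf{k})$; this is a $*$-homomorphism since $\mathcal{H}_0$ reduces $\rho(C^*(\G))$ by Proposition \ref{form}. Conjugating by the unitary $W$ of \eqref{uni} and using the identification $C^*(\tilde{C}, \tilde{Z}) \cong C(\mathbb{T}) \otimes C(\mathbb{T})$ sending $\tilde{C} \mapsto z \otimes I$ and $\tilde{Z} \mapsto I \otimes z$ (already established in the discussion preceding Proposition \ref{form} via the gauge action), the formulas \eqref{formulas} become term-by-term identical with the defining expressions for $\hat{A}, \dots, \hat{Z}$ in \eqref{a}--\eqref{z}. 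Hence $W\sigma(R)W^* = \hat{R}$ for $R \in \{A, B, C, X, Y, Z\}$, and the image of the composite map is all of $SU_0(3)$, yielding a surjective $*$-homomorphism $\phi : C^*(\G) \to SU_0(3)$ determined on generators by $\phi(R) = \hat{R}$.

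For injectivity and the intertwining claim, I would apply Theorem \ref{gauge}. Orient the grading so that red arrows carry rank $(1,0)$ and blue arrows rank $(0,1)$; then $\alpha_{(t_1, t_2)}$ scales $A, B, C$ by $t_1$ and $X, Y, Z$ by $t_2$, which exactly matches the action $\beta$ on $SU_0(3)$ coming from \eqref{betagen}. Thus $\phi \circ \alpha_t = \beta_t \circ \phi$ holds on generators, and extends to all of $C^*(\G)$ by continuity and $*$-multiplicativity. Non-vanishing of $\phi(P_v) = \hat{P}_v$ for every vertex $v$ is immediate from the explicit tensor-product formulas \eqref{proj1}--\eqref{proj6}, so Theorem \ref{gauge} gives that $\phi$ is injective, and hence an isomorphism. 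The intertwining with the right action stated in the proposition is precisely $\phi \circ \alpha = \beta \circ \phi$, since $\beta$ is the restriction of the right maximal-torus action by \eqref{beta}. The genuine work has already been absorbed into Lemma \ref{relations} and Proposition \ref{form}, so the remaining subtlety is purely bookkeeping: one must pin down the isomorphism $C^*(\tilde{C}, \tilde{Z}) \cong C(\mathbb{T})^{\otimes 2}$ to be gauge-equivariant, which is exactly what the choice made in the setup of Proposition \ref{form} guarantees, so that substituting into \eqref{formulas} truly reproduces \eqref{a}--\eqref{z}.
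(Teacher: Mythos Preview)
Your proposal is correct and follows essentially the same route as the paper: obtain the surjection $\phi$ from the explicit formulas of Proposition~\ref{form} (together with the identification $C^{*}(\tilde C,\tilde Z)\cong C(\mathbb{T})\otimes C(\mathbb{T})$), verify that $\phi$ intertwines the gauge action with $\beta$, check $\phi(P_{v})\neq 0$ via \eqref{proj1}--\eqref{proj6}, and conclude injectivity from Theorem~\ref{gauge}. Your choice of orientation (red $=(1,0)$, blue $=(0,1)$) is the swap of the paper's convention after Proposition~\ref{su3graph}, which simply makes the intertwining $\phi\circ\alpha_{t}=\beta_{t}\circ\phi$ hold without a coordinate permutation; either way Theorem~\ref{gauge} applies.
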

\begin{proof}
From the formulas~\eqref{formulas}, as well as the comments about $C^{*}(\tilde{C},\tilde{Z})$ made before the statement of Propostion~\ref{form}, we have a surjective homomorphism $\phi:C^{*}(\G)\to SU_{0}(3)$. The gauge action $\alpha_{t},$ for $t\in \mathbb{T}^{2},$ on $C^{*}(\G)$ acts on $A,..,Z$ for $t=(t_{1},t_{2})$ as $$\{X,Y,Z\}\mapsto \{t_{1}X,t_{1}Y,t_{1}Z\},$$ $$\{A,B,C\}\mapsto \{t_{2}A,t_{2}B,t_{2}C\}.$$ From~\eqref{betagen}, recall that we had the homomorphism $\beta:\mathbb{T}^{2}\to Aut(SU_{0}(3))$ with action induced by~\eqref{betagen}. It follows that we have $\phi\circ\alpha_{t}=\beta_{t}\circ \phi.$ Clearly $\phi(P_{\sigma})\neq 0$ for all $\sigma\in \Go.$  We then get from Theorem~\ref{gauge} that $\phi$ is faithful, and thus an isomorphism.
\end{proof}

\subsection{Proof that $SU_{0}(3)\cong SU_{q}(3)$}
Recall that we consider the $C^*$-algebra $SU_{0}(3)$ as a closed subalgebra of $ C^{*}(S)^{\otimes  3}\otimes C(\mathbb{T})\otimes C(\mathbb{T}).$ Proposition~\ref{iso0} gave us the result that $SU_{0}(3)\cong C^{*}(\G),$ and thus in order to show that $SU_{q}(3)$ is the graph $C^{*}$-algebra of $\G$, we need to prove that we have an isomorphism $SU_{0}(3)\cong SU_{q}(3).$ In order to do this, we essentially retrace the proof of the q-independence of $SU_{q}(3)$ from~\cite{giselsson}, and extend it to the limit $q\to 0.$ 
\\

In this section, we aim to prove the following result:
\begin{prop}\label{main}
For all $q\in (0,1),$ we have an isomorphism $SU_{0}(3)\cong SU_{q}(3)$ that is equivariant with respect to the right-actions.
\end{prop}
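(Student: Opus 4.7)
The plan is to extend the $q$-independence argument of~\cite{giselsson} down to $q=0$ by exhibiting, for each $q\in(0,1)$, a gauge-equivariant isomorphism $SU_q(3)\cong C^*(\G)$. Since Proposition~\ref{iso0} already supplies the equivariant isomorphism $C^*(\G)\cong SU_0(3)$, composing with its inverse will deliver the desired equivariant isomorphism $SU_0(3)\cong SU_q(3)$. The task therefore reduces to realizing $SU_q(3)$ itself as the Cuntz-Krieger algebra of $\G$.

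First I would construct, for each $q \in (0,1)$, a family of partial isometries $A_q,B_q,C_q,X_q,Y_q,Z_q$ inside $SU_q(3)$ which play the role of $\hat A,\hat B,\hat C,\hat X,\hat Y,\hat Z$. The natural candidates are the partial isometries appearing in the polar decompositions of the operators $\Xi_q(C^{\omega_i}_j)^*$: since the moduli $|\Xi_q(C^{\omega_i}_j)^*|$ lie in $SU_q(3)$, the polar parts will also lie in $SU_q(3)$ provided the spectrum of each modulus is isolated from $0$ on its support projection, so that the characteristic function of $(0,\infty)$ can be inserted via continuous functional calculus. The norm convergences~\eqref{a}-\eqref{z} show that as $q\to 0$ these polar parts converge to $\hat A,\ldots,\hat Z$, so this construction is continuous along $q\in[0,1)$ and gives a natural candidate set of Cuntz-Krieger generators inside every fiber.

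Second, I would verify that for each $q\in(0,1)$ the generators $A_q,\ldots,Z_q$ satisfy exactly the relations tabulated in Lemma~\ref{relations}, and that the vertex projections $P_v^q$ defined by the same formulas as~\eqref{proj1}-\eqref{proj6} are nonzero, mutually orthogonal, and sum to $I$. These are a finite list of identities between partial isometries in the Soibelman picture and should be verified by the same kind of bookkeeping used at $q=0$, taking advantage of the fact that the relevant commutation rules degenerate continuously in $q$. By the universal property of $C^*(\G)$ one then obtains a surjective $*$-homomorphism $\phi_q : C^*(\G)\to SU_q(3)$. Because the right-action of $\mathbb{T}^2$ on $SU_q(3)$ is implemented by~\eqref{beta} and acts on $A_q,\ldots,Z_q$ by the same phases as $\alpha$ acts on the Cuntz-Krieger generators, the map $\phi_q$ intertwines $\alpha$ and $\beta$. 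The Gauge Invariance Uniqueness Theorem (Theorem~\ref{gauge}) then upgrades $\phi_q$ to an isomorphism, since $\phi_q(P_\sigma)\neq 0$ for every $\sigma\in\Go$ (these are nonzero projections inside the faithful ambient representation).

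The main obstacle is the first step: for $q>0$ the element $\Xi_q(C^{\omega_i}_j)^*$ is not itself a partial isometry, so one must carefully extract a polar part and prove it actually belongs to $SU_q(3)$ rather than merely to the ambient algebra $C^*(S)^{\otimes 3}\otimes C(\mathbb{T})^{\otimes 2}$. This is exactly the kind of spectral-isolation argument that drives the proof in~\cite{giselsson}, and extending it uniformly down to $q=0$ requires checking that the relevant spectral gaps do not collapse in the limit. Once the partial isometries are in hand, the verification of the Cuntz-Krieger relations, the torus equivariance, and the nonvanishing of vertex projections is concrete and mirrors the $q=0$ analysis carried out earlier in the paper.
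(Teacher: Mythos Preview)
Your approach is genuinely different from the paper's, and it contains a real gap at precisely the point you flag as ``the main obstacle,'' but the failure is more fundamental than you suggest.

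The paper does \emph{not} attempt to locate Cuntz--Krieger generators inside $SU_q(3)$ for $q>0$. Instead it works modulo the compact ideal: Lemma~\ref{indy} shows that the image of $SU_q(3)$ under a ``boundary'' representation $\Lambda_q$ is $q$-independent, and then Nagy's non-constructive lifting result (Lemma~\ref{gnagy}), based on extension theory for type~$\mathrm{I}$ $C^*$-algebras, promotes this to an isomorphism $\Gamma_q\otimes\iota\otimes\iota:SU_0(3)\to SU_q(3)$. The introduction explicitly notes that this is why one cannot say how the $2$-graph ``sits inside'' $SU_q(3)$ for $q>0$.

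Your direct route fails for the following reason. Several of the operators $\Xi_q(C_j^{\omega_i})^*$ are built from factors of the diagonal operator $D_q$ (for instance the one converging to $\hat Z$). For $q\in(0,1)$ the operator $D_q$ is positive and injective with $\sigma(D_q)=\{q^n:n\ge 0\}\cup\{0\}$, so $0$ is an accumulation point of the spectrum, not an isolated point. Two things follow. First, the characteristic function of $(0,\infty)$ is not continuous on $\sigma(D_q)$, so the functional-calculus argument you invoke does not place the polar part inside $SU_q(3)$. Second, and more decisively, the polar part of $D_q$ is the identity $I$, not the projection $P$; hence the polar part of the operator limiting to $\hat Z$ has full range rather than range projection $P\otimes P\otimes I\otimes I\otimes I$. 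The resulting family $A_q,\dots,Z_q$ therefore has the wrong source and range projections and cannot satisfy (CK3)--(CK4) for $\G$. In particular your assertion that ``as $q\to 0$ these polar parts converge to $\hat A,\dots,\hat Z$'' is false: polar decomposition is not norm-continuous across $q=0$ exactly because the spectral gap at $0$ collapses. Even granting membership in $SU_q(3)$, there is no reason for identities such as $Y_qB_q=A_qZ_q$ to hold exactly at positive $q$; the generators satisfy the $q$-deformed $\mathbb{C}[SU(3)]_q$ relations, which only degenerate to the $2$-graph factorisation rules in the limit.

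In short, the paper's detour through the Calkin algebra and Nagy's lemma is not a convenience but a necessity: the naive polar-part construction does not produce a Cuntz--Krieger family in $SU_q(3)$ for $q>0$.
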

We first need some lemmas:
\begin{lem}\label{indy}
The image of $SU_{q}(3)$ in $$ \left((C^{*}(S)\otimes C^{*}(S)\otimes C(\mathbb{T})\oplus (C(\mathbb{T})\otimes C^{*}(S)\otimes C^{*}(S))\right)\otimes C(\mathbb{T})\otimes C(\mathbb{T})$$ under the representation $$\Lambda_{q}=  \left((\pi_{1}^{(q)}\boxtimes \pi_{2}^{(q)}\boxtimes \tau_{1})\oplus (\tau_{1}\boxtimes \pi_{2}^{(q)}\boxtimes \pi_{1}^{(q)})\right)\boxtimes (\tau_{1}\boxtimes \tau_{2})$$
does not depend on $q.$ Moreover, this $C^{*}$-algebra is generated by the limits $q\to 0$ of the image of the generators $C_{j}^{\omega_{i}},$ for $i=1,2,$ and $j=1,2,3.$
\end{lem}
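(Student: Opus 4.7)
The strategy is to exhibit an explicit, $q$-independent generating set for $\Lambda_q(SU_q(3))$, realized as the norm-limits of $\Lambda_q(C_j^{\omega_i})$ as $q\to 0$. This mirrors the calculation of $\hat A,\dots,\hat Z$ in \eqref{a}--\eqref{z}, but with one fewer Soibelman factor in each summand of $\Lambda_q$; it then adapts the $q$-independence argument of \cite{giselsson} to include the endpoint $q=0$.

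First, I would write out $\Lambda_q(C_j^{\omega_i})$ explicitly using the definitions \eqref{SC1}, \eqref{pii} and \eqref{generators}. Since each summand of $\Lambda_q$ contains only two $\pi_i^{(q)}$-factors together with the character $\tau_1$, expansion of the comultiplication produces, on each block, a polynomial in $S$, $S^*$, $C_q$, $D_q$ tensored with torus coordinates. Because $\|D_q\|=q$ and $\|C_q-I\|\to 0$ as $q\to 0$, the norm-limits $\tilde A,\tilde B,\tilde C,\tilde X,\tilde Y,\tilde Z$ exist in the ambient algebra, are quasinormal partial isometries block-diagonal over the two summands, and should look like truncations of $\hat A,\dots,\hat Z$. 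Let $\mathcal B$ be the $C^*$-algebra they generate.

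Second, I would prove $\mathcal B\subseteq \Lambda_q(SU_q(3))$ for every fixed $q\in(0,1)$. The tactic is to recover the projections $P$ and $Q$ inside $\Lambda_q(SU_q(3))$ as continuous functional calculus applied to elements such as $\Lambda_q(C_j^{\omega_i})^*\Lambda_q(C_j^{\omega_i})$, whose spectra are $q$-independent modulo a norm-controlled deformation. Combined with polynomial expressions in the $\Lambda_q(C_j^{\omega_i})$, this produces each $\tilde R$ as a genuine element of $\Lambda_q(SU_q(3))$ for every $q$, not merely as a limit. The reverse inclusion $\Lambda_q(SU_q(3))\subseteq\mathcal B$ then follows by writing each $\Lambda_q(C_j^{\omega_i})$ as $\tilde R$ plus a correction that is itself a polynomial in the generators of $\mathcal B$ (the corrections involve the same $P,Q$ and powers of $S,S^*$, plus scalar $q$-dependent coefficients), and using that the $C_j^{\omega_i}$ generate $SU_q(3)$ as a $C^*$-algebra.

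The main obstacle is extending the functional-calculus step of \cite{giselsson} down to $q=0$ uniformly. In \cite{giselsson} one exploits strict positivity of $D_q$ to isolate spectral projections; here one must show that the recovery of $P$ and $Q$ by polynomials in $\Lambda_q(C_j^{\omega_i})$ can be controlled with bounds \emph{uniform in} $q\in[0,1)$, equivalently that the relevant spectral gap does not collapse as $q\to 0$. This is where the simpler structure of $\Lambda_q$ — with only two Soibelman factors and hence an explicit block-diagonal limit — becomes essential, since it makes the limiting spectra of the relevant positive elements directly computable. Once the gauge-equivariance of the resulting identification is checked against \eqref{beta}, both claims of the lemma follow.
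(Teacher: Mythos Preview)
Your approach is essentially the paper's: compute $\Lambda_q(C_j^{\omega_i})$ explicitly, show the $q\to 0$ limits lie in the image via polar decomposition and isolated-spectral-point arguments, and conversely express the $q$-generators back in terms of the limits (the paper does this with explicit norm-convergent series rather than finite polynomials, and organizes the calculation by treating the $\omega_1$- and $\omega_2$-generators as two separate subalgebras $\mathcal A_1,\mathcal A_2$). One correction: no uniformity in $q$ is needed here---for each \emph{fixed} $q\in(0,1)$ the relevant spectral gaps (e.g.\ $0$ isolated in the spectrum of the absolute value of~\eqref{help}, or $1$ isolated in the spectrum of $\Lambda_q(C_3^{\omega_1})\Lambda_q(C_3^{\omega_1})^*$) are strictly positive, which is all the functional-calculus step requires.
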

\begin{proof}
This is verified by a straightforward calculation. Recall that the generators $C_{j}^{\omega_{i}}$ are given by the formulas
$$
\begin{array}{ccc}
C_{j}^{\omega_{1}}=t_{j1}, & C_{j}^{\omega_{2}}=q^{(j-3)}t_{(4-j)3}^{*}.
\end{array}
$$
We calculate the images of these under $\Lambda_{q}$ as
\begin{equation}\label{lamfor}
\begin{array}{ccc}
\Lambda_{q}(C_{1}^{\omega_{1}})^{*}= \left((C_{q} S\otimes I\otimes z)\oplus (z\otimes I\otimes C_{q}S)\right)\otimes z\otimes I, \\
\Lambda_{q}(C_{2}^{\omega_{1}})^{*}=  \left(((-D_{q})\otimes I\otimes z)\oplus (\bar{z}\otimes C_{q}S\otimes( -D_{q}))\right)\otimes z\otimes I,\\
\Lambda_{q}(C_{3}^{\omega_{1}})^{*}= \left((0\otimes 0\otimes 0)\oplus (I\otimes (-D_{q})\otimes (-D_{q}))\right)\otimes z\otimes I,\\
\\
\Lambda_{q}(C_{1}^{\omega_{2}})^{*}=  \left((I\otimes C_{q}S\otimes I)\oplus (I\otimes C_{q}S\otimes I)\right)\otimes I\otimes z,\\
\Lambda_{q}(C_{2}^{\omega_{2}})^{*}=  \left((C_{q}S\otimes D_{q}\otimes I)\oplus (z\otimes D_{q}\otimes I)\right)\otimes I\otimes z,\\
\Lambda_{q}(C_{3}^{\omega_{2}})^{*}= \left(( D_{q}\otimes D_{q}\otimes I)\oplus (0\otimes 0\otimes 0)\right)\otimes I\otimes z.\\
\end{array}
\end{equation}
Notice that all images have norm-continuous limits at $q=0.$ We denote these by $\Lambda_{0}(C_{j}^{\omega_{i}})^{*}$.
It's not hard to see that the two $C^{*}$-algebras $$\mathcal{A}_{1}=C^{*}(\Lambda_{q}(C_{1}^{\omega_{1}})^{*},\Lambda_{q}(C_{2}^{\omega_{1}})^{*},\Lambda_{q}(C_{3}^{\omega_{1}})^{*}),$$  $$\mathcal{A}_{2}=C^{*}(\Lambda_{q}(C_{1}^{\omega_{2}})^{*},\Lambda_{q}(C_{2}^{\omega_{2}})^{*},\Lambda_{q}(C_{3}^{\omega_{2}})^{*})$$ are independent on $q$ and generated by the limits $q\to 0.$ 
\\

We prove this for $\mathcal{A}_{1}.$ The case of $\mathcal{A}_{2}$ is treated similarly. 
Take the limits $q\to 0$ to get
\begin{equation}\label{formim}
\begin{array}{ccccc}
\Lambda_{q}(C_{1}^{\omega_{1}})^{*}= \left(( S\otimes I\otimes z)\oplus (z\otimes I\otimes S)\right)\otimes z\otimes I, \\
\Lambda_{0}(C_{2}^{\omega_{1}})^{*}= \left(((-P)\otimes I\otimes z)\oplus (\bar{z}\otimes S\otimes( -P))\right)\otimes z\otimes I,\\
\Lambda_{0}(C_{3}^{\omega_{1}})^{*}=  \left((0\otimes 0\otimes 0)\oplus (I\otimes (-P)\otimes (-P))\right)\otimes z\otimes I.
\end{array}
\end{equation}
First we show that $\Lambda_{0}(C_{i}^{\omega_{1}})^{*}\in \mathcal{A}_{1}$ for $i=1,2,3.$ Clearly, we have that $\Lambda_{q}(C_{1}^{\omega_{1}})\Lambda_{q}(C_{1}^{\omega_{1}})^{*}$ is invertible. It follows that
$$
\Lambda_{q}(C_{1}^{\omega_{1}})^{*}(\Lambda_{q}(C_{1}^{\omega_{1}})\Lambda_{q}(C_{1}^{\omega_{1}})^{*})^{-\frac{1}{2}}= \left((S\otimes I\otimes z)\oplus(z\otimes I\otimes S)\right)\otimes \bar{z}\otimes I=
$$
$$
=\Lambda_{0}(C_{1}^{\omega_{1}})^{*}\in \mathcal{A}_{1}.
$$  

From this, we get that also
$$
\Lambda_{0}(C_{1}^{\omega_{1}})\Lambda_{0}(C_{1}^{\omega_{1}})^{*}-\Lambda_{0}(C_{1}^{\omega_{1}})^{*}\Lambda_{0}(C_{1}^{\omega_{1}})= \left((P\otimes I\otimes I)\oplus(I\otimes I\otimes P)\right)\otimes I\otimes I\in \mathcal{A}_{1}.
$$
Thus
$$
\left( \left((P\otimes I\otimes I)\oplus(I\otimes I\otimes P)\right)\otimes I\otimes I\right)\Lambda_{q}(C_{2}^{\omega_{1}})^{*}=
$$
\begin{equation}\label{help}
=\left(((-P)\otimes I\otimes z)\oplus (\bar{z}\otimes C_{q}S\otimes (-P))\right)\otimes \bar{z}\otimes I\in \mathcal{A}_{1}.
\end{equation}
It's not hard to see that $0$ is an isolated point in the spectrum of the absolute value of~\eqref{help}. Thus the partial isometry in its polar decomposition lies in $\mathcal{A}_{1},$ so that
$$
 \left((P\otimes I\otimes z)\oplus (\bar{z}\otimes S\otimes P)\right)\otimes\bar{z}\otimes I=\Lambda_{0}(C_{2}^{\omega_{1}})^{*}\in \mathcal{A}_{1}
$$
From~\eqref{lamfor}, it's also clear that $1$ is an isolated point in the spectrum of $\Lambda_{q}(C_{3}^{\omega_{1}})\Lambda_{q}(C_{3}^{\omega_{1}})^{*}$, it follows from the spectral theorem that $  \left((0\otimes 0\otimes 0)\oplus (I\otimes P\otimes P)\right)\otimes I\otimes I\in \mathcal{A}_{1},$ so that we have 
$$
  \left((0\otimes 0\otimes 0)\oplus (I\otimes P\otimes P)\right)\Lambda_{q}(C_{3}^{\omega_{1}})^{*}=
$$
$$
=  \left((0\otimes 0\otimes 0)\oplus (I\otimes (-P)\otimes (-P))\right)\otimes \bar{z}\otimes I=\Lambda_{0}(C_{3}^{\omega_{1}})^{*}\in \mathcal{A}_{1}.
$$
This finishes the proof that $\Lambda_{0}(C_{i}^{\omega_{1}})^{*}\in \mathcal{A}_{1}$ for $i=1,2,3.$ 
\\
That these elements also generates $\mathcal{A}_{1}$ follows from the easily verified norm-convergent formulas
$$
\Lambda_{q}(C_{1}^{\omega_{1}})^{*}=\Lambda_{0}(C_{1}^{\omega_{1}})^{*}\left(\Lambda_{q}(C_{1}^{\omega_{1}})\Lambda_{q}(C_{1}^{\omega_{1}})^{*}\right)^{\frac{1}{2}}=
$$
$$
\Lambda_{0}(C_{1}^{\omega_{1}})^{*}\left((1-q^{2})\sum_{k=0}^{\infty}q^{2k}(\Lambda_{0}(C_{1}^{\omega_{1}})^{*})^{k}(\Lambda_{0}(C_{1}^{\omega_{1}}))^{k}\right)^{\frac{1}{2}},
$$
$$
\Lambda_{q}(C_{2}^{\omega_{1}})^{*}=
$$
$$
\sum_{k=0}^{\infty}q^{k}\Lambda_{0}(C_{1}^{\omega_{1}})^{*k}\left((1-q^{2})\sum_{j=1}^{\infty}q^{2(j-1)}\Lambda_{0}(C_{2}^{\omega_{1}})^{*j}\Lambda_{0}(C_{2}^{\omega_{1}})^{j}\right)^{\frac{1}{2}}\Lambda_{0}(C_{2}^{\omega_{1}})^{*}\Lambda_{0}(C_{1}^{\omega_{1}})^{k},
$$
$$
\Lambda_{q}(C_{3}^{\omega_{1}})^{*}=
$$
$$
\sum_{k,j=0}^{\infty}q^{k+j}\left((\Lambda_{0}(C_{1}^{\omega_{1}})^{*})^{k
}(\Lambda_{0}(C_{2}^{\omega_{1}})^{*})^{j}\right)\Lambda_{0}(C_{3}^{\omega_{1}})^{*}\left((\Lambda_{0}(C_{2}^{\omega_{1}}))^{j}(\Lambda_{0}(C_{1}^{\omega_{1}}))^{k}\right).
$$
\end{proof}
By~\cite{KorS}, we have that $SU_{q}(3)$ is a type $\mathrm{I}$ $C^{*}$-algebra. Thus the $C^{*}$-algebra from Lemma~\ref{indy} is type $\mathrm{I}$ as well, by virtue of being the image of $SU_{q}(3)$ under a homomorphism. 
\\

One of the central idea in G. Nagy paper~\cite{gnagy} (e.g. to show that the $C^{*}$-algebras $SU_{q}(3)$ are all isomorphic) was to use the following lifting result, based on the theory developed in~\cite{ppv}, to extend the trivial isomorphisms from Lemma~\ref{indy}, to isomorphisms between the full $C^{*}$-algebras.  
\begin{lem}[{Lemma $2$ in~\cite{gnagy}}]\label{gnagy}
Let $\mathrm{H}$ be a separable Hilbert space, let $\mathcal{K}$ be the space of compact operators on $\mathrm{H},$ let $Q(\mathrm{H})=\mathcal{B}(\mathrm{H})/\mathcal{K}$ be the Calkin algebra and $p: \mathcal{B}(\mathrm{H})\to Q(\mathrm{H})$ the quotient map. Suppose $A$ is a fixed separable $C^{*}$-algebra of type $\mathrm{I}$ and $\psi_{q}:A\to Q(\mathrm{H}),$ $q\in [0,1]$ is a point-norm continuous family of injective $*$-homomorphisms. Denote  $$\mathfrak{A}_{q}:=\psi_{q}(A):$$ 
$$
M_{q}:=p^{-1}(\mathfrak{A}_{q}).
$$
Then there exists a family of injective $*$-homomorphisms $\Psi_{q}:M_{0}\to \mathcal{B}(\mathrm{H}),$ $q\in [0,1]$ with the following properties
\begin{enumerate}[(a)]
\item $\Psi_{q}(M_{0})=M_{q}$ for $q\in [0,1]$ and $\Psi_{0}=\id_{M_{0}},$
\item the family $\Psi_{q}:M_{0}\to \mathcal{B}(\mathrm{H}),$ $q\in[0,1]$ is point-norm continuous,
\item for every $q\in [0,1],$ the diagram
\begin{equation}\label{com1}
\begin{tikzpicture}[>=latex',node distance = 1.5cm]'
\node at (0,0) (A) {\text{$M_{0}$}};
\node at (2,0) (B) {\text{$M_{q}$}};
\node at (0,-2) (C) {\text{$\mathfrak{A}_{0}$}};
\node at (2,-2) (D) {\text{$\mathfrak{A}_{q}$}};
\draw[->](A) to node [above]{\text{$\Psi_{q}$}} (B);
\draw[->](A) to node [left]{\text{$p$}} (C);
\draw[->](B) to node [right]{\text{$p$}} (D);
\draw[->](C) to node [below]{\text{$\psi_{q}\circ \psi_{0}^{-1}$}} (D);
\end{tikzpicture}
\end{equation}
is commutative.
\end{enumerate}
\end{lem}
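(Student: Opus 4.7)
The plan is to realize each $\Psi_q$ as the restriction to $M_0$ of an inner automorphism $\mathrm{Ad}(U_q)$ of $\mathcal{B}(\mathrm{H})$ coming from a point-norm continuous family of unitaries $U_q \in \mathcal{B}(\mathrm{H})$ with $U_0 = I$ and $p(U_q)\psi_0(a)p(U_q)^{*} = \psi_q(a)$ for all $a \in A$. Since inner automorphisms preserve $\mathcal{K}$, $\mathrm{Ad}(U_q)$ sends $M_0 = p^{-1}(\psi_0(A))$ bijectively onto $p^{-1}(\psi_q(A)) = M_q$, gives $\Psi_0 = \mathrm{id}_{M_0}$, and induces $\mathrm{Ad}(p(U_q))$ on the Calkin quotient, which by construction agrees with $\psi_q \circ \psi_0^{-1}$ on $\mathfrak{A}_0$. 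Thus properties (a), (c) and $\Psi_q(M_0) = M_q$ reduce to the construction of such a family, while point-norm continuity (b) reduces to norm-continuity of $q \mapsto U_q$ via the estimate $\|U_q x U_q^{*} - U_{q'} x U_{q'}^{*}\| \le 2\|U_q - U_{q'}\|\,\|x\|$.

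At each fixed $q$, existence of a unitary $u_q \in Q(\mathrm{H})$ implementing $\psi_q \circ \psi_0^{-1}$ follows from Voiculescu's absorption theorem: with $A$ separable and type $\mathrm{I}$ (hence nuclear), any two injective unital $*$-homomorphisms $A \to Q(\mathrm{H})$ absorb the split extension and are therefore strongly unitarily equivalent in the corona. Such a $u_q$ lifts to some $U_q \in \mathcal{B}(\mathrm{H})$ since the unitary group of $Q(\mathrm{H})$ equals $p$ applied to the unitaries of $\mathcal{B}(\mathrm{H})$ (the Calkin algebra has trivial $K_1$). This yields the family pointwise, but with no control over continuity in $q$.

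The heart of the argument — and the main obstacle — is promoting this pointwise existence to a norm-continuous lift. My approach is to bundle the family $\{\psi_q\}_{q\in[0,1]}$ into a single injective $*$-homomorphism $\widetilde\psi : A \to Q(\mathrm{H}) \otimes C[0,1]$ (this is where point-norm continuity of the family is used), or equivalently to view it as an essential extension of $A \otimes C[0,1]$ by $\mathcal{K}\otimes C[0,1]$, and then invoke the parametrized absorption machinery of~\cite{ppv}. That theory guarantees that for separable type $\mathrm{I}$ algebras, any such continuous field of extensions is equivalent to its constant trivialization through a unitary in the parametrized corona that lifts to a norm-continuous path in $\mathcal{B}(\mathrm{H})$. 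Normalizing by multiplying with $U_0^{*}$ (which projects to $\mathrm{Ad}(p(U_0)) = \psi_0\circ\psi_0^{-1} = \mathrm{id}_{\mathfrak{A}_0}$, hence does not disturb the intertwining property) produces the desired family with $U_0 = I$.

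Once $U_q$ is in hand, I would set $\Psi_q := \mathrm{Ad}(U_q)|_{M_0}$ and collect the conclusions: injectivity is automatic; (a) is immediate; (b) follows from norm-continuity of $q \mapsto U_q$; and (c) commutes because $p \circ \mathrm{Ad}(U_q) = \mathrm{Ad}(p(U_q)) \circ p$, whose restriction to $\psi_0(A)$ equals $\psi_q \circ \psi_0^{-1}$ by design. The classical Voiculescu theorem supplies the pointwise ingredient; the delicate — and essentially external — input is the parametrized absorption result from~\cite{ppv}, without which there is no way to synchronize the unitaries across $q$.
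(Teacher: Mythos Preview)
The paper does not prove this lemma; it is quoted verbatim as Lemma~2 of~\cite{gnagy} and used as a black box. The only hint the paper gives about its proof is the remark that it is ``based on the theory developed in~\cite{ppv}''. So there is no ``paper's own proof'' to compare against.

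That said, your sketch is in the right spirit and matches what the paper signals: the parametrized absorption machinery of Pimsner--Popa--Voiculescu is indeed the engine behind Nagy's lemma, and your reduction to finding a norm-continuous path of unitaries $U_q$ with $\mathrm{Ad}(p(U_q))\circ\psi_0=\psi_q$ is the correct framework. One factual slip: the Calkin algebra does \emph{not} have trivial $K_1$; in fact $K_1(Q(\mathrm{H}))\cong\mathbb{Z}$ via the Fredholm index, and a unitary in $Q(\mathrm{H})$ lifts to a unitary in $\mathcal{B}(\mathrm{H})$ if and only if its index is zero. This is not fatal --- the unitaries produced by the strong form of Voiculescu's theorem can be arranged to lift (they arise as images of honest unitaries), or one can correct the index by a shift --- but the statement as written is wrong and should be replaced by the index-zero condition together with an argument that it is satisfied here.
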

By using similar arguments (or rather half the arguments) as Lemma 2 in ~\cite{giselsson}, one can extend this Lemma to the half-closed interval $[0,1).$ We remark that this extension is not really needed for the purpose here, as we are mostly interested in establish an isomorphism between $SU_{0}(3)$ and $SU_{q}(3),$ and thus need to only consider the interval $[0,q].$
\begin{lem}\label{compacts}
Let $\mathcal{K}\subseteq C^{*}(S)^{\otimes 3}$ denotes the $C^{*}$-algebra of compact operators. We have $ \mathcal{K}\otimes C(\mathbb{T})\otimes C(\mathbb{T})\subseteq SU_{0}(3)$.
\end{lem}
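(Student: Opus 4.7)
The strategy is to show that every matrix unit $|e_\alpha\rangle\langle e_\beta|\in\mathcal{K}(\ell^{2}(\mathbb{N})^{\otimes 3})$, tensored with an arbitrary $f\in C(\mathbb{T})\otimes C(\mathbb{T})$, already lies in $SU_{0}(3)$; closed linear spans then recover the full $\mathcal{K}\otimes C(\mathbb{T})\otimes C(\mathbb{T})$. The starting point is that $\hat P_{CZ}=P\otimes P\otimes P\otimes I\otimes I$ is a rank-one projection on the $\mathcal{T}^{\otimes 3}$-factor tensored with the unit of $C(\mathbb{T})\otimes C(\mathbb{T})$, so it already lies in $\mathcal{K}\otimes\mathbb{C}\otimes\mathbb{C}\subseteq \mathcal{K}\otimes C(\mathbb{T})\otimes C(\mathbb{T})$.

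The first step is to put the ``diagonal torus'' into $SU_{0}(3)$: the identities $\hat C\hat P_{CZ}=\hat P_{CZ}\cdot(I\otimes I\otimes I\otimes z\otimes I)$ and $\hat Z\hat P_{CZ}=\hat P_{CZ}\cdot(I\otimes I\otimes I\otimes I\otimes z)$, together with their adjoints, force $\hat P_{CZ}\otimes f\in SU_{0}(3)$ for every $f\in C(\mathbb{T})\otimes C(\mathbb{T})$. The second step is to produce, for each $\alpha=(j,k,m)\in\mathbb{N}^{3}$, a rank-one partial isometry in $SU_{0}(3)$ sourced at $\hat P_{CZ}$ and ranged at $|e_\alpha\rangle\langle e_\alpha|\otimes I\otimes I$. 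Setting
$$w_\alpha:=\hat A^{k}\hat B^{m}\hat Y^{j}\hat P_{CZ},$$
the only real subtlety is $\hat Y^{j}\hat P_{CZ}$: in the formula $\hat Y=S\otimes I\otimes P\otimes I\otimes z+P\otimes S\otimes S^{*}\otimes I\otimes z$, the second summand annihilates $\hat P_{CZ}$ because $S^{*}P=0$, and by a short induction (using $P(SP)=0$ to kill the analogous cross-term at each step) one finds $\hat Y^{j}\hat P_{CZ}=|e_{j}\rangle\langle e_{0}|\otimes P\otimes P\otimes I\otimes z^{j}$. The remaining factors $\hat B^{m}$ and $\hat A^{k}$ act transparently and yield
$$w_\alpha=|e_{j}\otimes e_{k}\otimes e_{m}\rangle\langle e_{0}\otimes e_{0}\otimes e_{0}|\otimes z^{k+m}\otimes z^{j}.$$

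For $\alpha,\beta=(j',k',m')\in\mathbb{N}^{3}$ and $f\in C(\mathbb{T})\otimes C(\mathbb{T})$, the element
$$w_\alpha\cdot(\hat P_{CZ}\otimes f)\cdot w_\beta^{*}=|e_\alpha\rangle\langle e_\beta|\otimes (z^{k+m}\otimes z^{j})\,f\,(\bar{z}^{k'+m'}\otimes\bar{z}^{j'})$$
then lies in $SU_{0}(3)$. Since the flanking torus factors are unitary, as $f$ ranges over $C(\mathbb{T})\otimes C(\mathbb{T})$ we obtain every $|e_\alpha\rangle\langle e_\beta|\otimes h$, and closed linear combinations recover $\mathcal{K}\otimes C(\mathbb{T})\otimes C(\mathbb{T})$. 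The main (though modest) obstacle is the inductive computation of $\hat Y^{j}\hat P_{CZ}$, since $\hat Y$ is the one generator of $SU_{0}(3)$ with a two-term expression; the point that makes everything fit together is that the $S^{*}$ in its second summand annihilates the rank-one piece supported at $e_{0}$ in the third tensor slot at every inductive step.
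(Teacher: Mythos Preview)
Your proof is correct and follows essentially the same route as the paper: both first show that $P\otimes P\otimes P\otimes C(\mathbb{T})\otimes C(\mathbb{T})\subseteq SU_{0}(3)$ via $\hat C\hat P_{CZ}$ and $\hat Z\hat P_{CZ}$, and then use the partial isometries $\hat A^{k}\hat B^{m}\hat Y^{j}\hat P_{CZ}$ to reach every matrix unit tensored with an arbitrary torus function. The only cosmetic difference is that the paper inserts the normalizing factors $\hat C^{-(k+m)}\hat Z^{-j}$ (unitary on the range of $\hat P_{CZ}$) to land exactly on $e_{j}\otimes e_{k}\otimes e_{m}$, whereas you absorb the resulting $z^{k+m}\otimes z^{j}$ into $f$; your explicit inductive computation of $\hat Y^{j}\hat P_{CZ}$ spells out what the paper leaves as ``a simple calculation.''
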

\begin{proof}
This is a calculation done using the operators~\eqref{a}-\eqref{z}. We have $(\hat{C}^{*} \hat{C})(\hat{Z}^{*}\hat{Z} )= P\otimes P\otimes P\otimes I\otimes I\in SU_{0}(3).$ Thus also 
$$
\hat{C}( P\otimes P\otimes P\otimes I\otimes I)= P\otimes P\otimes P\otimes z\otimes I\in SU_{0}(3),
$$
$$
\hat{Z}( P\otimes P\otimes P\otimes I\otimes I)= P\otimes P\otimes P\otimes I\otimes z\in SU_{0}(3).
$$
We then get from Stone–Weierstrass theorem that $ P\otimes P\otimes P\otimes C(\mathbb{T})\otimes C(\mathbb{T})\subseteq SU_{0}(3).$ Assume that we have the usual inclusion $C(\mathbb{T})\subseteq \mathcal{B}(L^{2}(\mathbb{T})).$ A simple calculation shows that for all $(j,k,m)\in \mathbb{N}^{3}$, we have
$$
(\hat{A}^{k}\hat{B}^{m}\hat{Y}^{j}\hat{C}^{-(m+k)}\hat{Z}^{-j})( x_{0}\otimes x_{0}\otimes x_{0}\otimes 1\otimes 1)= x_{j}\otimes x_{k}\otimes x_{m}\otimes 1\otimes 1.
$$
From this, it follows easily that $ \mathcal{K}\otimes C(\mathbb{T})\otimes C(\mathbb{T})\subseteq SU_{0}(3).$
\end{proof}
\begin{proof}[Proof of Theorem~\ref{main}]
We have the $SU_{q}(3)$-representation  $$\Pi_{q}=\pi_{1}^{(q)}\boxtimes \pi_{2}^{(q)}\boxtimes \pi_{1}^{(q)}:SU_{q}(3)\to C^{*}(S)^{\otimes 3}\subseteq \mathcal{B}(\ell^{2}(\mathbb{N})^{\otimes 3}),$$ where $\pi_i:SU_{q}(3)\to C^{*}(S)$ is as~\eqref{pii}. By~\cite{KorS}, this representation is irreducible. As $SU_{q}(3)$ is type $\mathrm{I},$ we have 
$$\mathcal{K}\subseteq \im \Pi_{q}\subseteq C^{*}(S)^{\otimes 3}$$ (where again $\mathcal{K}$ denotes the compact operators). Consider now the homomorphism defined as the composition
\begin{equation}\label{compo}
SU_{q}(3)\overset{\Pi_{q}}{\to}\mathcal{B}(\ell^{2}(\mathbb{N})^{\otimes 3}) \overset{\sim \mathcal{K}}{\to}\mathcal{B}(\ell^{2}(\mathbb{N})^{\otimes 3})/\mathcal{K}=\mathcal{Q}(\ell^{2}(\mathbb{N})^{\otimes 3}).
\end{equation}
We can deduce from the $SU_{q}(3)$-representation theory (Theorem 4.1 (ii) in~\cite{nt}) that the kernel of the representation~\eqref{compo} is the same as for the representation
$$
\Phi_{q}=(\pi_{1}^{(q)}\boxtimes \pi_{2}^{(q)}\boxtimes \tau_{1})\oplus (\tau_{1}\boxtimes \pi_{2}^{(q)}\boxtimes \pi_{1}^{(q)}).
$$
By evatuating the $C^{*}$-algebra from Lemma~\ref{indy} at the point $(1,1)\in \mathbb{T}^{2}$ at the factor $C(\mathbb{T})\otimes C(\mathbb{T})$, one sees that the image of $\Phi_{q}$ is actually independent of $q$, and that it's generated as a $C^{*}$-algebra by the limits $\lim_{q\to 0}\Phi_{q}(C_{j}^{\omega_{i}}).$
\\

We denote $A:=\im\, \Phi_{q}$. By comparison of kernels, it follows from the representation theory of $SU_{q}(3)$ that we have an injective homomorphism $\varphi_{q}:A\to \mathcal{Q}(\ell^{2}(\mathbb{N})^{\otimes 3})$ such that the following diagram commutes
\begin{equation}\label{maps}
\begin{tikzpicture}[>=latex',node distance = 1.5cm]'
\node at (0,-2) (A) {\text{$\mathcal{B}(\ell^{2}(\mathbb{N})^{\otimes 3})$}};
\node at (0,0) (B) {\text{$C(SU_{3})_{q}$}};
\node at (4,-2) (C) {\text{$\mathcal{Q}(\ell^{2}(\mathbb{N})^{\otimes 3})$}};
\node at (4,0) (D) {\text{$A$}};
\draw[->](A) to node [below]{\text{$\sim \mathcal{K}$}} (C);
\draw[->](B) to node [left]{\text{$\Pi_{q}$}} (A);
\draw[->](B) to node [above]{\text{$\Phi_{q}$}} (D);
\draw[->](D) to node [right]{\text{$\varphi_{q}$}} (C);
\end{tikzpicture}
\end{equation}
Let $F(\mathbf{t}(q))$ be a fixed non-commutative polynomial in the set $\{C_{j}^{\omega_{i}},\,(C_{j}^{\omega_{i}})^{*}\,|\,j=1,2,\, i=1,2,3\}$. Such elements are dense in $SU_{q}(3)$ for $q\in [0,1)$. As the images of the generators $C_{j}^{\omega_{i}}$ under $\Phi_{q}$ and $\Pi_{q}$ depends norm-continuously on $q\in [0,1)$, it follows that so does $\Phi_{q}(F(\mathbf{t}(q)))$ and $\Pi_{q}(F(\mathbf{t}(q))).$ In particular, we have norm-limits 
$$\underset{q\to 0}{\lim}\,\Phi_{q}(F(\mathbf{t}(q)))=\Phi_{0}(F(\mathbf{t}(0))),$$  $$\underset{q\to 0}{\lim}\,\Pi_{q}(F(\mathbf{t}(q)))=\Pi_{0}(F(\mathbf{t}(0))).$$ We claim that $\varphi_{q}$ depends point-norm continuously on $q\in [0,1)$. To prove continuity at $q\in (0,1),$ we consider elements in $A$ of the form $\Phi_{q}(F(\mathbf{t}(q)))$. We have
$$
||\varphi_{q}(\Phi_{q}(F(\mathbf{t}(q))))-\varphi_{q+\epsilon}(\Phi_{q}(F(\mathbf{t}(q))))||\leq
$$
$$
\leq || \varphi_{q}(\Phi_{q}(F(\mathbf{t}(q))))-\varphi_{q+\epsilon}(\Phi_{q+\epsilon}(F(\mathbf{t}(q+\epsilon))))||+
$$
$$
+|| \varphi_{q+\epsilon}(\Phi_{q+\epsilon}(F(\mathbf{t}(q+\epsilon))))-\varphi_{q+\epsilon}(\Phi_{q}(F(\mathbf{t}(q))))||\leq
$$
$$
\leq || \Pi_{q}(F(\mathbf{t}(q)))-\Pi_{q+\epsilon}(F(\mathbf{t}(q+\epsilon)))||+|| \Phi_{q+\epsilon}(F(\mathbf{t}(q+\epsilon)))-\Phi_{q}(F(\mathbf{t}(q)))||,
$$
and by the above comments, both terms in the last expression $\to 0$ as $|\epsilon| \to 0.$ The claim follows as these elements are dense in $A$. We show the existence of the point-norm limit $\lim_{q\to 0}\varphi_{q},$ and that $\varphi_{0}$ maps $\Phi_{0}(F(\mathbf{t}(0)))$ to $\Pi_{0}(F(\mathbf{t}(0)))+\mathcal{K}$. For $\epsilon>0$, we have
$$
||\Pi_{0}(F(\mathbf{t}(0)))+\mathcal{K}-\varphi_{\epsilon}(\Phi_{0}(F(\mathbf{t}(0))))||\leq
$$
$$
||\Pi_{0}(F(\mathbf{t}(0)))+\mathcal{K}-\varphi_{\epsilon}(\Phi_{\epsilon}(F(\mathbf{t}(\epsilon))))||+||\varphi_{\epsilon}(\Phi_{\epsilon}(F(\mathbf{t}(\epsilon))))-\varphi_{\epsilon}(\Phi_{0}(F(\mathbf{t}(0))))||\leq
$$
$$
||\Pi_{0}(F(\mathbf{t}(0)))-\Pi_{\epsilon}(F(\mathbf{t}(\epsilon)))||+||\Phi_{\epsilon}(F(\mathbf{t}(\epsilon)))-\Phi_{0}(F(\mathbf{t}(0)))||,
$$
and both these last terms converges $\to 0$ as $\epsilon \to 0.$ Again, by the density of $\Phi_{0}(F(\mathbf{t}(0)))$, the point-norm limit $\varphi_{0}:A\to \mathcal{Q}(\ell^{2}(\mathbb{N})^{\otimes 3})$ exists and is an injective homomorphim (as it's a point-norm limit of injective homomorphisms). We can thus apply Lemma~\ref{gnagy} to get isomorphisms $$\Gamma_{q}:p^{-1}(\varphi_{0}(A))\to p^{-1}(\varphi_{q}(A))=\Phi_{q}(C(SU_{3})_{q}).$$ Let us denote $M:=p^{-1}(\varphi_{0}(A)).$ By Lemma~\ref{compacts}, evaluating the $C(\mathbb{T})\otimes C(\mathbb{T})$-factors in $SU_{0}(3)$ at a point shows that the $C^{*}$-algebra generated by elements $\Pi_{0}(F(\mathbf{t}(0)))$ contains $\mathcal{K}$. Thus it coincides with $M$. In particular we have that $$SU_{0}(3)\subseteq   M\otimes C(\mathbb{T})\otimes C(\mathbb{T})\subseteq  C^{*}(S)^{\otimes 3}\otimes C(\mathbb{T})\otimes C(\mathbb{T}).$$ As 
$$ (\pi^{q}_{1}\boxtimes \pi^{q}_{2}\boxtimes \pi^{q}_{1})\boxtimes (\tau_{1}\boxtimes\tau_{2})= \Pi_{q}=\Xi_{q}\boxtimes (\tau_{1}\boxtimes\tau_{2})$$ 
gives a faithful representation of $SU_{q}(3),$ we can consider this $C^{*}$-algebra as a sub-$C^{*}$-algebra $$SU_{q}(3)\subseteq  \Pi_{q}(SU_{q}(3))\otimes C(\mathbb{T})\otimes C(\mathbb{T})\subseteq  C^{*}(S)^{\otimes 3}\otimes C(\mathbb{T})\otimes C(\mathbb{T}).$$ We now show that $ \Gamma_{q}\otimes \iota\otimes\iota$ restricts to an isomorphism
\begin{equation}\label{mapsto}
\begin{array}{ccc}
\Gamma_{q}\otimes \iota\otimes\iota:SU_{0}(3)\to SU_{q}(3), & \text{for $q\in (0,1)$.}
\end{array}
\end{equation}
To do this, note that we have the ideal $ \mathcal{K}\otimes C(\mathbb{T})\otimes C(\mathbb{T})\subseteq SU_{q}(3)$ for $q\in [0,1)$. For $q\in (0,1)$ this is standards theory (Lemma 16 in~\cite{giselsson}), and for $q=0$ this is Lemma~\ref{compacts}. By the diagram~\eqref{com1}, the isomorphisms $\Gamma_{q}$ fixes $\mathcal{K}$, so that~\eqref{mapsto} fixes $ \mathcal{K}\otimes C(\mathbb{T})\otimes C(\mathbb{T}).$ Thus to show that~\eqref{mapsto} is an isomorphism, it's enough to check this modulo this ideal. By~\eqref{com1}, this is the same as showing that $ \varphi_{q}\circ \varphi_{0}^{-1}\otimes \iota\otimes\iota$ restricts to a isomorphism
\begin{equation}\label{both}
( \varphi_{q}\circ \varphi_{0}^{-1}\otimes \iota\otimes\iota):SU_{0}(3)/\mathcal{K}\otimes C(\mathbb{T})\otimes C(\mathbb{T})\to SU_{q}(3)/ \mathcal{K}\otimes C(\mathbb{T})\otimes C(\mathbb{T}).
\end{equation}
Notice that for $q\in[0,1),$ we have $$SU_{q}(3)/ \mathcal{K}\otimes C(\mathbb{T})\otimes C(\mathbb{T})\subseteq  \varphi_{q}(A)\otimes C(\mathbb{T})\otimes C(\mathbb{T}),$$
so that~\eqref{both} can be reduced to the question if the following equality holds
\begin{equation}\label{sides}
( \varphi_{0}^{-1}\otimes \iota\otimes \iota)(SU_{0}(3)/ \mathcal{K}\otimes C(\mathbb{T})\otimes C(\mathbb{T}))=( \varphi_{q}^{-1}\otimes\iota\otimes \iota)(SU_{q}(3)/ \mathcal{K}\otimes C(\mathbb{T})\otimes C(\mathbb{T}))
\end{equation}
as sub-$C^{*}$-algebras of $A\otimes C(\mathbb{T})\otimes C(\mathbb{T}).$ Due to~\eqref{maps}, the question of equality in~\eqref{sides} is the same as whether we have independence of $q$ of the images of
$$
 \Phi_{q}\boxtimes \tau_{1}\boxtimes \tau_{2}= \left((\pi_{1}^{(q)}\boxtimes \pi_{2}^{(q)}\boxtimes \tau_{1})\oplus (\tau_{1}\boxtimes \pi_{2}^{(q)}\boxtimes \pi_{1}^{(q)})\right)\boxtimes\tau_{1}\boxtimes \tau_{2}.
$$
However, this was proven in Lemma~\ref{indy}. Thus~\eqref{sides} holds, and hence $\Gamma_{q}\otimes \iota\otimes\iota$ induces an isomorphism $SU_{0}(3)\to SU_{q}(3).$ By~\eqref{beta}, we have for all $t\in \mathbb{T}$ that $$(\Gamma_{q}\otimes \iota\otimes\iota)\circ\beta_{t}=\beta_{t}\circ (\Gamma_{q}\otimes \iota\otimes\iota).$$
\end{proof}
\begin{rem}
Since $\Gamma_{q}$ fixes the compacts, it can be written as $\Gamma_{q}(a)=U_{q}^{*}a U_{q}$ for some unitary $U_{q}\in \mathcal{B}(\ell^{2}(\mathbb{N})^{\otimes 3})$ (Corollary 1.10 in~\cite{dav}).
\end{rem}
By combining Proposition~\ref{iso0} with Proposition~\ref{main}, we have proven the main result here:
\begin{thm}
For every $q\in (0,1),$ we have an isomorphism $\phi_{q}:C^{*}(\G)\to SU_{q}(3),$ intertwining the gauge action on $C^{*}(\G)$ with the right-action on $SU_{q}(3)$.
\end{thm}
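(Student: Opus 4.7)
The plan is to obtain $\phi_q$ as a composition of the two equivariant isomorphisms already established in the preceding subsections, so there is essentially no new content beyond assembling the ingredients. First, I invoke Proposition~\ref{iso0}, which produces an isomorphism $\phi:C^{*}(\G)\to SU_{0}(3)$ satisfying $\phi\circ\alpha_{t}=\beta_{t}\circ\phi$ for every $t\in\mathbb{T}^{2}$, where $\alpha$ is the gauge action on $C^{*}(\G)$ and $\beta$ is the restriction to $SU_{0}(3)$ of the $\mathbb{T}^{2}$-action $f(z_{1},z_{2})\mapsto f(t_{1}z_{1},t_{2}z_{2})$ on the ambient algebra $C(\mathbb{T}^{2}:\mathcal{T}^{\otimes 3})$.

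Second, I apply Proposition~\ref{main}, which provides for each $q\in(0,1)$ an isomorphism $\Gamma_{q}\otimes\iota\otimes\iota:SU_{0}(3)\to SU_{q}(3)$ satisfying $(\Gamma_{q}\otimes\iota\otimes\iota)\circ\beta_{t}=\beta_{t}\circ(\Gamma_{q}\otimes\iota\otimes\iota)$ for all $t\in\mathbb{T}^{2}$. Here the key point, already built into the proof of Proposition~\ref{main}, is that $\Gamma_{q}$ acts only on the $C^{*}(S)^{\otimes 3}$-factor and the two $C(\mathbb{T})$-factors are untouched; since $\beta_{t}$ acts by the formula \eqref{beta} purely on those two torus coordinates, the $\mathbb{T}^{2}$-equivariance is automatic from the tensor decomposition.

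Third, I define $\phi_{q}:=(\Gamma_{q}\otimes\iota\otimes\iota)\circ\phi:C^{*}(\G)\to SU_{q}(3)$. This is an isomorphism as a composition of isomorphisms, and for any $t\in\mathbb{T}^{2}$ we have
\begin{equation*}
\phi_{q}\circ\alpha_{t}=(\Gamma_{q}\otimes\iota\otimes\iota)\circ\phi\circ\alpha_{t}=(\Gamma_{q}\otimes\iota\otimes\iota)\circ\beta_{t}\circ\phi=\beta_{t}\circ(\Gamma_{q}\otimes\iota\otimes\iota)\circ\phi=\beta_{t}\circ\phi_{q},
\end{equation*}
so $\phi_{q}$ intertwines the gauge action on $C^{*}(\G)$ with the right-action on $SU_{q}(3)$. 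Since both intermediate results have already been proven, there is no real obstacle to overcome at this stage; the only thing to verify carefully is that the equivariances of Proposition~\ref{iso0} and Proposition~\ref{main} refer to the \emph{same} $\mathbb{T}^{2}$-action $\beta$ on $SU_{0}(3)$, and this has been explicitly arranged via the identifications made in \eqref{betagen} and \eqref{beta}.
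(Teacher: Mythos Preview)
Your proof is correct and follows exactly the same approach as the paper: define $\phi_{q}=(\Gamma_{q}\otimes\iota\otimes\iota)\circ\phi$ and compose the equivariances from Proposition~\ref{iso0} and Proposition~\ref{main}. The paper's own proof is a single sentence doing precisely this; your version simply spells out the intertwining computation and the compatibility of the two $\beta$-actions, which is a welcome clarification but adds no new ideas.
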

\begin{proof}
Take $\phi_{q}=(\Gamma_{q}\otimes\iota\otimes\iota)\circ\phi,$ where $\phi$ is the isomorphism from Proposition~\ref{iso0}, and $\Gamma_{q}\otimes\iota\otimes\iota$ is the isomorphism from the proof of Proposition~\ref{main}.
\end{proof}

\end{document}